\newtheorem{theorem}{Theorem}
\newtheorem{lemma}{Lemma}
\newtheorem{definition}{Definition}
\newtheorem{remark}{Remark}
\newenvironment{proof}[1][Proof]{\noindent\textbf{#1.} }{\ \rule{0.5em}{0.5em}}
\numberwithin{equation}{section}
\begin{document}

\title{Exact null-controllability of interconnected abstract evolution
systems by scalar force motion}
\author{B. Shklyar
(Holon Institute of Technology, Holon, Israel)}
\date{}
\maketitle

\begin{abstract}
The paper deals with exact null-controllability problem for a linear control
system consisting of two serially connected abstract control systems.
Controllability conditions are obtained. Applications to the exact
null-controllability for interconnected control system of heat and wave
equations are considered.
\end{abstract}

\textbf{Keywords:} Controllability, interconnected evolution equations, \newline
strongly minimal families.

\textbf{AMS Subject Classification:} 93B05 (primary); 93B28 (secondary)

\section{Introduction and problem statement}

Many engineering applications generate interactive physical processes
described by interconnected control systems. Control design for such systems
modeled by interconnected partial differential control systems,~have
investigated intensively over the last years.

The goal of the present paper is to establish complete null controllability
conditions for a control object containing two control abstract evolution
systems interconnected into a series such that a control function from the
second control system is an output of the first one.

Let $X_1, X_2$ be complex separable Hilbert spaces. Consider the control
evolution equation \cite{Hille&Philips},\cite{Krein} with scalar control
\begin{align}
\dot{x}_{1}\left( t\right) & =A_{1}x_{1}\left( t\right) +b_{1}v\left(
t\right) ,x_{1}\left( 0\right) =x_{1}^{0},  \label{1.1} \\
v\left( t\right) & =\left( c,x_{2}\left( t\right) \right) ,0\leq t<+\infty ,
\label{1.2}
\end{align}%
where $x_{2}(t)$ is a mild solution of the another control equation of the
form%
\begin{equation}
\dot{x}_{2}\left( t\right) =A_{2}x_{2}\left( t\right) +b_{2}u\left( t\right)
,0\leq t<+\infty ,~x_{2}\left( 0\right) =x_{2}^{0}.  \label{1.3}
\end{equation}%
Here $x_{1}\left( t\right) ,x_{1}^{0},b_{1}\in X_{1},$ where $X_{1}$ is the
state space of equation (\ref{1.1}), $v\left(t\right) \in \mathbb{C},
x_{2}\left( t\right) ,x_{2}^{0},c,b_{2}\in X_{2},$ where $X_{2}$ is the
state space of equation (\ref{1.3}), $u\left(t\right) \in \mathbb{C}$, and
the linear operators $A_{1}$ and $A_{2}$ generate strongly continuous $C_{0}$%
-semigroup $S_{1}\left( t\right) $ in $X_{1}$ and $S_{2}\left( t\right) $ in
$X_{2}$ correspondingly \cite{Hille&Philips, Krein}.

The formulas $b_{1}v$ and $b_{2}u, v,u\in \mathbb{C}$ express linear bounded
operators from $\mathbb{C}$ to $X_{1}$ and from $\mathbb{C}$ to $X_{2}.$

The interconnected system (\ref{1.1})--(\ref{1.3}) is governed by a control $%
u\left( t\right) $ of equation (\ref{1.3}).

\section{Basic assumptions and definitions}

\label{Basic assumptions}


\begin{enumerate}
\item The operator $A_{1}$ has purely point spectrums $\sigma _{1}$ with no
finite limit points. 

Since we use scalar controls we assume the geometrical multiplicity of
eigenvalues of the operator $A_{1}$ to be equal to 1.

\item All eigenvectors of the operators $A_{1}$ produce a Riesz basic in
their linear span.
\end{enumerate}

Let the eigenvalues $\lambda _{j} \in \sigma _{1},j=1,2,\ldots ,$ be
enumerated in the order of non-decreasing absolute values, let $\alpha _{j}$
be the algebraic multiplicities \footnote{%
The geometric multiplicity is the number of Jordan blocks corresponding to $%
\lambda _{j}\in \sigma _{1}$. Throughout in the paper it is equal to $1$.}
of $\lambda _{j}\in \sigma _{1}$ correspondingly, and let $\varphi
_{jk},j\in {\mathbb{N}},k=1,2,\ldots ,\alpha _{j}$ be the generalized
eigenvectors of the operator $A_1, A_1\varphi _{j\alpha _{j}}=\lambda
_{j}\varphi _{j\alpha _{j}}, j\in {\mathbb{N}}$, and let $\psi _{jk}, j\in {%
\mathbb{N}},k=1,2,\ldots ,\alpha _{j}$, be the generalized eigenvectors of
the adjoint operator $A_1^{\ast },A_1^{\ast }\psi _{j\alpha _{j}}=\bar{%
\lambda}_{j}\psi _{j\alpha _{j}},j\in {\mathbb{N}},$ chosen such that
\begin{eqnarray}
(\varphi _{s\alpha _{s}-l+1,} \psi _{jk}) &=&\delta _{sj}\delta _{lk},
\label{2.0} \\
s,j &\in &{\mathbb{N}},\,\,\,l=1,\ldots ,\alpha _{s}
,\,\,\,\,k=1,\ldots,\alpha _{j}.  \notag
\end{eqnarray}%
We use the following notations\footnote{%
If $0\in \sigma _{1},$ we denote $\lambda _{0}=0$ and in (\ref{2.0})--(\ref%
{2.4}) $j\in 0\cup \mathbb{N}.$}:
\begin{align}
x_{jk}\left( t\right) & =\left( x\left( t,x_{0},u\left( \cdot \right)
\right) ,\psi _{jk}\right) ,\,\,x_{jk}^{0}=\left( x_{1}^{0},\psi
_{jk}\right),  \label{2.01} \\
b_{1jk}& =\left( b_{1},\psi _{jk}\right) ,j\in {\mathbb{N}},k=1,2,\ldots
\alpha _{j},  \notag
\end{align}

\begin{align}
g_{jk}(-t)& =e^{-\lambda _{1j}t}\sum_{l=0}^{\alpha _{j}-k}b_{1jk+l}{\frac{%
(-t)^{l}}{l!}},\,\,t\in \lbrack 0,t_{1}],  \label{2.1} \\
j& \in {\mathbb{N}},\,\,\,k=1,2,\ldots \alpha _{j}.  \notag
\end{align}

The following properties of sequences $\left\{x_j\in X_1,j=1,2,\dots
\right\} $ are very significant throughout in the given paper.

\begin{definition}
\label{D2.1} The sequence $\left\{ x_{j}\in X_1,j=1,2,\dots \right\} $ is
said to be minimal, if there no element of the sequence belonging to the
closure of the linear span of others. By other words,
\begin{equation*}
x_{j}\notin \overline{\mathrm{span}}\left\{ x_{k}\in X_1,k=1,2,\dots ,k\neq
j\right\} .
\end{equation*}
\end{definition}

\begin{definition}
\label{D2.2}The sequence $\left\{ x_{j}\in X_1,j=1,2,\dots \right\} $ is
said to be strongly minimal, if there exists a positive number $\gamma >0$
such that
\begin{equation}
\gamma \sum_{k=1}^{n}\left\vert c_{k}\right\vert ^{2}\leq \left\Vert
\sum_{k=1}^{n}c_{k}x_{k}\right\Vert ^{2},\,\,n=1,2,...,  \label{2.2}
\end{equation}%
where
\begin{equation*}
\gamma =\lim_{n\rightarrow \infty }\min_{\substack{ c_{1},..,c_{n}:  \\ %
\sum_{k=1}^{n}\left\vert c_{k}\right\vert ^{2}=1}}\left\Vert
\sum_{k=1}^{n}c_{k}x_{k}\right\Vert ^{2}.
\end{equation*}
\end{definition}

Using above properties the following results have been proven in \cite%
{Shklyar2011}.

\begin{definition}
\label{D3.0} Equation (\ref{1.1}) is said to be exact null-controllable on $%
\left[ 0,t_{1}\right] $ by square integrable controls, if for each $%
x_{10}\in \mathfrak{X}_{1}$ and $\alpha \in \mathbb{R}$ there exists a
control $u\left( \cdot \right) \in L_{2}\left[ 0,t_{1}\right] $, such that
\begin{equation}
x_{1}\left( t_{1},x_{10},v\left( \cdot \right) \right) =0.  \label{2.2.1}
\end{equation}
\end{definition}

\begin{theorem}
\label{T2.1}\cite{Shklyar2011}~Let the sequence of eigenvectors of operator $%
A_{1}$ forms a Riesz basic in $X_{1}$. Equation (\ref{1.1}) is exact
null-controllable on $\left[ 0,t_{1}\right] ~$by scalar controls, if and
only if the sequence (\ref{2.1}) of generalized exponents is strongly
minimal in $L_{2}([0,t_{1}])$.
\end{theorem}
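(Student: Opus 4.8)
The plan is to characterize exact null-controllability of the abstract equation (1.1) on $[0,t_1]$ via a moment problem obtained by expanding the mild solution along the Riesz basis of eigenvectors of $A_1$, and then to recognize that the resulting moment problem is solvable for every initial datum precisely when the family of generalized exponentials (2.1) is strongly minimal in $L_2([0,t_1])$. The natural reference framework is the duality between controllability and observability/moment problems (see Fattorini–Russell and the abstract treatments in Avdonin–Ivanov), specialized to the scalar-input case under Assumptions 1–2.

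**First I would** write down the mild solution of (1.1):
\begin{equation*}
x_1(t_1) = S_1(t_1)x_1^0 + \int_0^{t_1} S_1(t_1-t)b_1 v(t)\,dt,
\end{equation*}
and pair it with the adjoint generalized eigenvectors $\psi_{jk}$. Because the eigenvectors of $A_1$ form a Riesz basis and the geometric multiplicity is $1$, the semigroup acts on each finite-dimensional root subspace as a Jordan block, so
\begin{equation*}
(S_1(t_1-t)b_1,\psi_{jk}) = e^{\lambda_j(t_1-t)}\sum_{l=0}^{\alpha_j-k} b_{1\,j\,k+l}\frac{(t_1-t)^l}{l!},
\end{equation*}
which, after the substitution $\tau = t_1-t$, is exactly $g_{jk}(-\tau)$ up to the Riesz-basis normalization. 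Hence the null-controllability condition $x_1(t_1)=0$ is equivalent to the countable system of moment equations
\begin{equation*}
\int_0^{t_1} g_{jk}(t_1-t)\,v(t)\,dt = -\big(S_1(t_1)x_1^0,\psi_{jk}\big),\qquad j\in\mathbb{N},\ k=1,\dots,\alpha_j,
\end{equation*}
and, since $\{\varphi_{jk}\}$ is a Riesz basis, the right-hand side ranges over all of $\ell_2$ as $x_1^0$ ranges over $X_1$ (the Riesz-basis property converts $\ell_2$ coefficient sequences into $X_1$ vectors and back with equivalent norms).

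**The core step** is then the equivalence: the moment problem "find $v\in L_2([0,t_1])$ with $\int_0^{t_1} g_{jk}\,v = d_{jk}$" is solvable for every target $\{d_{jk}\}\in\ell_2$ if and only if $\{g_{jk}\}$ is strongly minimal in $L_2([0,t_1])$. The "if" direction follows because strong minimality of $\{g_{jk}\}$ is equivalent (by Definition 2.2 and a standard Gram-matrix argument) to the existence of a biorthogonal family $\{q_{jk}\}\subset\overline{\mathrm{span}}\{g_{jk}\}$ that is itself an $\ell_2$-Riesz-type system, so $v=\sum_{j,k} d_{jk} q_{jk}$ converges in $L_2$ and solves all the moment equations. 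For the "only if" direction: if the moment map $v\mapsto \{\int_0^{t_1} g_{jk}v\}$ is onto $\ell_2$, then by the open mapping theorem its adjoint $\{c_{jk}\}\mapsto \sum c_{jk}g_{jk}$ is bounded below, which is precisely the strong minimality inequality (2.2). I would also note that surjectivity onto $\ell_2$ here is genuinely equivalent to null-controllability for every $x_1^0\in X_1$ because the reachable subspace and $\mathfrak X_1$ both correspond to $\ell_2$ under the Riesz isomorphism.

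**The main obstacle I anticipate** is the bookkeeping for Jordan blocks of size $\alpha_j>1$: one must verify that the shifted/normalized adjoint eigenvectors $\psi_{jk}$ in (2.0) produce exactly the polynomial-times-exponential structure of (2.1) with the indices aligned as written, and that the biorthogonal family realizing strong minimality still yields an $L_2$-convergent series for the control — i.e. that the contribution of each finite Jordan block is uniformly controlled so the global series converges. A secondary technical point is justifying that the right-hand side sequence $\big(S_1(t_1)x_1^0,\psi_{jk}\big)$ is genuinely in $\ell_2$ and exhausts $\ell_2$; this uses the Riesz-basis hypothesis together with the fact that $S_1(t_1)$ is boundedly invertible on each root subspace, so no spectral information is lost. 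Once these normalizations are pinned down, the controllability-moment-problem dictionary and the functional-analytic equivalence (surjectivity $\Leftrightarrow$ bounded-below adjoint $\Leftrightarrow$ strong minimality) close the argument.
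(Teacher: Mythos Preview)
The paper does not prove Theorem~\ref{T2.1}; it is quoted from \cite{Shklyar2011} as an established result, so there is no in-paper proof to compare against. Your sketch --- reduce null-controllability to a moment problem by pairing the mild solution with the adjoint generalized eigenvectors, then invoke the equivalence ``moment map onto $\ell_2$ $\Leftrightarrow$ adjoint bounded below $\Leftrightarrow$ strong minimality'' --- is the standard route and is correct in outline.

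One clarification worth making explicit in your write-up: when you pair $x_1(t_1)=0$ with $\psi_{jk}$, both sides acquire the same factor $e^{\lambda_j t_1}$ (from $S_1(t_1-t)$ on the left and from $S_1(t_1)$ on the right), and after cancelling it the moment equations read, in the simple-eigenvalue case,
\[
\int_0^{t_1} b_{1j}\,e^{-\lambda_j t}\,v(t)\,dt \;=\; -\,x_j^0,\qquad x_j^0=(x_1^0,\psi_j).
\]
This is what makes your claim that ``the right-hand side ranges over all of $\ell_2$'' true without any hypothesis on the growth of $\lambda_j$: the Riesz-basis assumption converts $x_1^0\in X_1$ exactly into $\{x_j^0\}\in\ell_2$. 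With that cancellation recorded, the open-mapping/adjoint-bounded-below step goes through cleanly, and the Jordan-block bookkeeping you flag is routine.
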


For the simplicity of the exposition we assume below that all the
eigenvalues of the operator $A_{1}$ are simple. In this case the eigenvector
of the operator $A_{1}^{\ast },$ corresponding to the eigenvalue $\lambda
_{j},$ can be denoted by $\psi _{j},~j=1,2,...,$ $b_{1j}=\left( b_{1},\psi
_{j}\right) ,~j=1,2,...$and the family of generalized exponents (\ref{2.1})
can be simplified and written by exponents

\begin{equation}
\left\{ g_{j}\left( -t\right) =b_{1j}e^{-\lambda _{1j}t}, j=1,2,...\right\} .
\label{2.3}
\end{equation}

If $0\in \sigma _{1},$ then according to our assumption $0$ is a simple
eigenvalue, and $\sigma _{1}=\left\{ \lambda _{j},j=0,1,2,...,\right\} ,$
where $\lambda _{0}=0$. Otherwise $\sigma _{1}=\left\{ \lambda _{j},j=\
1,2,...,\right\} .$ In both cases $\lambda _{j}\neq 0,~j=1,2,...,,$ and

\begin{equation}
\left\{ g_{j}\left( -t\right) =b_{1j}e^{-\lambda _{j}t}~,j=0,1,2,...\right\}
\label{2.4}
\end{equation}

\section{Controllability of interconnected equations by force motions}

The problem can be investigated by two different ways.

The first way is :

\noindent
we construct a composite evolution equation
\begin{eqnarray}
\dot{x} &=&\mathtt{A}x\left( t\right) +\mathtt{b}u\left( t\right) ,
\label{2.5} \\
x\left( 0\right) &=&x_{0},  \notag
\end{eqnarray}%
where $x\left( t\right) =\left(
\begin{array}{c}
x_{1}\left( t\right) \\
x_{2}\left( t\right)%
\end{array}%
\right) \in X=X_{1}\times X_{2},~x\left( 0\right) =\left(
\begin{array}{c}
x_{10}\  \\
x_{20}%
\end{array}%
\right) ,$

$\mathtt{A}=\left(
\begin{array}{cc}
A_{1} & B \\
0 & A_{2}%
\end{array}%
\right) ,$ where the linear operator $B:X_{2}\rightarrow X_{1}$ is defined
by $Bx_{2}=b_{1}\left( c,x_{2}\right) ,$ $\mathtt{b}=\left(
\begin{array}{c}
0 \\
b_{2}%
\end{array}%
\right) \in X.$

System (\ref{2.5}) is considered as a system combining the features of both
equations (\ref{1.1}) and (\ref{1.3}).

Next need to prove that system (\ref{2.5}) is an equation of the form (\ref%
{1.1}), i.e. the operator $\mathtt{A}$ generates $C_{0}$-semigroup and
satisfies the conditions imposed on the operator $A_{1}$ (see page \ref%
{Basic assumptions}), and afterward one can use known controllability
conditions of equation (\ref{1.1})

As a rule this way has been used in the literature for the case when both equations
(\ref{1.1}) and (\ref{1.3}) are control PDE's.

Since the linear operator $Bx_{2}=b_{1}\left( c,x_{2}\right) $ is obviously
bounded one can prove that the operator $\mathtt{A}$ generates $C_{0}$%
-semigroup.

In order to continue we need to prove that assumptions on page \ref{Basic
assumptions} hold. It may be done, if the operator $A_{2}$ satisfies the
same conditions on page \ref{Basic assumptions}. However in this paper we
know nothing about the operator $A_{2}$ except that the operator $A_{2}$
generates a $C_{0}$-semigroup.

It gives the motivation to use the different approach.

\subsection{Controllability of equation (\protect\ref{1.1}) by smooth
controls}

Let $\ AC\left[ 0,t_{1}\right] $ be the space of absolutely continuous
functions defined on the closed segment $\left[ 0,t_{1}\right] ,~$and$\ $let$%
~a\in \mathbb{C}$. Denote:~
\begin{eqnarray*}
H_{\alpha }^{1}\left[ 0,t_{1}\right] &=&\left\{ v\left( \cdot \right) \in \
AC\left[ 0,t_{1}\right] ,~v(0)=a,v^{\prime }\left( \cdot \right) \in L_{2}%
\left[ 0,t_{1}\right] \right\} , \\
H_{\alpha \beta }^{1}\left[ 0,t_{1}\right] &=&\left\{ v\left( \cdot \right)
\in \ AC\left[ 0,t_{1}\right] ,~v(0)=a,v\left( t_{1}\right) =\beta
,~v^{\prime }\left( \cdot \right) \in L_{2}\left[ 0,t_{1}\right] \right\} ,
\\
H_{0}^{1}\left[ 0,t_{1}\right] &=&H_{00}^{1}\left[ 0,t_{1}\right]
\end{eqnarray*}

We know nothing about differential properties of a generalized solution $%
x_{2}\left( t\right) ,$ generated by control $u\left( \cdot \right) \in L_{2}%
\left[ 0,t_{1}\right] ,$ but in accordance with the definition of a
generalized solution of equation (\ref{1.3}) the function $v\left( t\right)
=\left( c,x_{2}\left( t\right) \right) $ defined by (\ref{1.2}) is
absolutely continuous for any $c\in D\left( A_{2}^{\ast }\right) \ $ \cite%
{Balakrishnan}. In order to keep the control object in the equilibrium
state, we will turn off the control $v\left( t\right) $ at the end of the
control process, i.e. $v\left( t\right) \equiv 0,t\geq t_{1}.$ Hence to
investigate the exact null-controllability of interconnected equations (\ref%
{1.1})-(\ref{1.3}) it makes sense to consider the exact null controllability
of equation (\ref{1.1}) on $\left[ 0,t_{1}\right] \ $ by single smooth
controls $v\left( \cdot \right) $ of the space $H_{\alpha }^{1}\left[ 0,t_{1}%
\right] $ .

\begin{definition}
\label{D3.3}Equation (\ref{1.1}) is said to be exact null-controllable on $%
\left[ 0,t_{1}\right] $ by smooth controls, if for each $x_{10}\in \mathfrak{%
X}_{1}$ and $\alpha \in \mathbb{C}$ there exists a control $v\left( \cdot
\right) \in H_{\alpha 0}^{1}\left[ 0,t_{1}\right] $, such that
\begin{equation}
x_{1}\left( t_{1},x_{10},v\left( \cdot \right) \right) =0.  \label{3.0}
\end{equation}
\end{definition}

To establish the controllability conditions by smooth controls, we need the
following auxiliary result.

\begin{lemma}
\label{L3.1} The operator $\mathcal{A=}\left(
\begin{array}{cc}
A_{1} & b_{1} \\
0 & 0%
\end{array}%
\right) \mathcal{\ }$ generates strongly continuous $C_{0}$-semigroup in the
product space $X_{1}\times \mathbb{C}.$
\end{lemma}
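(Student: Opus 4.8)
The operator $\mathcal{A}$ is a bounded (finite-rank in the second block) perturbation of the block-diagonal operator $\mathrm{diag}(A_1,0)$, so the natural strategy is to invoke the standard bounded-perturbation theorem for $C_0$-semigroup generators. First I would observe that $A_1$ generates the $C_0$-semigroup $S_1(t)$ on $X_1$ by hypothesis, and the zero operator on $\mathbb{C}$ generates the constant semigroup $t\mapsto \mathrm{Id}_{\mathbb{C}}$; hence the block-diagonal operator $\mathcal{A}_0=\left(\begin{smallmatrix}A_1 & 0\\ 0 & 0\end{smallmatrix}\right)$ generates the $C_0$-semigroup $\left(\begin{smallmatrix}S_1(t) & 0\\ 0 & \mathrm{Id}\end{smallmatrix}\right)$ on the product Hilbert space $X_1\times\mathbb{C}$, with domain $D(\mathcal{A}_0)=D(A_1)\times\mathbb{C}$.

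Next I would write $\mathcal{A}=\mathcal{A}_0+\mathcal{P}$, where $\mathcal{P}=\left(\begin{smallmatrix}0 & b_1\\ 0 & 0\end{smallmatrix}\right)$ acts by $\mathcal{P}(x_1,v)=(b_1 v,0)$. This $\mathcal{P}$ is a bounded linear operator on $X_1\times\mathbb{C}$: indeed $\|\mathcal{P}(x_1,v)\|=\|b_1 v\|=|v|\,\|b_1\|_{X_1}\le \|b_1\|_{X_1}\,\|(x_1,v)\|$, so $\|\mathcal{P}\|\le\|b_1\|_{X_1}<\infty$ (this just re-expresses the remark in the paper that $b_1 v$ is a bounded operator from $\mathbb{C}$ to $X_1$). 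By the classical perturbation theorem (Hille–Phillips / the references \cite{Hille&Philips}, \cite{Krein} already cited for semigroup theory), the sum of a $C_0$-semigroup generator and a bounded operator is again a $C_0$-semigroup generator on the same space with the same domain. Applying this to $\mathcal{A}_0$ and $\mathcal{P}$ gives that $\mathcal{A}=\mathcal{A}_0+\mathcal{P}$ generates a $C_0$-semigroup on $X_1\times\mathbb{C}$, which is the claim.

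The only point requiring a word of care, and the one I would flag as the main (mild) obstacle, is confirming that $D(\mathcal{A})=D(\mathcal{A}_0)=D(A_1)\times\mathbb{C}$ so that the perturbation theorem applies cleanly — but since $\mathcal{P}$ is bounded and everywhere defined on $X_1\times\mathbb{C}$, adding it does not change the domain, so $\mathcal{A}$ with its natural domain $D(A_1)\times\mathbb{C}$ is exactly $\mathcal{A}_0+\mathcal{P}$ in the sense of the theorem. If one wants an explicit semigroup rather than an abstract existence statement, one can alternatively solve the triangular system directly: the lower component is constant, $v(t)\equiv v_0$, and the upper component satisfies $\dot{x}_1=A_1 x_1+b_1 v_0$, whose mild solution is $x_1(t)=S_1(t)x_1^0+\left(\int_0^t S_1(s)\,ds\right)b_1\,v_0$; verifying that $T(t)(x_1^0,v_0)=\bigl(S_1(t)x_1^0+(\int_0^t S_1(s)\,ds)b_1 v_0,\; v_0\bigr)$ is a $C_0$-semigroup with generator $\mathcal{A}$ is then a routine check of the semigroup property and strong continuity at $t=0$, the latter following from strong continuity of $S_1(t)$ and $\|\int_0^t S_1(s)\,ds\|\to 0$ as $t\to 0^+$.
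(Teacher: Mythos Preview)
Your proposal is correct and follows essentially the same route as the paper: decompose $\mathcal{A}=\mathcal{A}_0+\mathcal{P}$ with $\mathcal{A}_0=\mathrm{diag}(A_1,0)$ and $\mathcal{P}=\left(\begin{smallmatrix}0&b_1\\0&0\end{smallmatrix}\right)$ bounded, then invoke the bounded-perturbation theorem. The only cosmetic difference is that the paper verifies that $\mathcal{A}_0$ is a generator via the Hille--Yosida resolvent bound $\|\mathcal{R}_0^n(\mu)\|\le K_1/(\mu-a)^n$, whereas you simply exhibit the semigroup $\mathrm{diag}(S_1(t),\mathrm{Id})$ directly; your way is more economical, and your explicit formula for $T(t)$ is a useful bonus not present in the paper.
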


\textbf{Proof. }Denote by $R$ $\left( \mu \right) $and $\mathcal{R}\left(
\mu \right) $ the resolvent operators of the operators $A\ $ and $\mathcal{A}
$ correspondingly.

Denote by $\mathcal{R}_{0}\left( \mu \right)$ the resolvent of the operator $%
\mathcal{A}_{0}\mathcal{=}\left(
\begin{array}{cc}
A_{1} & 0 \\
0 & 0%
\end{array}%
\right).$ Obviously
\begin{equation*}
\left\Vert \mathcal{R}_{0}^{n}\left( \mu \right) \left(
\begin{array}{c}
x \\
v%
\end{array}%
\right) \right\Vert ^{2}=\left\Vert R^{n}\left( \mu \right) x\right\Vert
^{2}+\left\Vert \frac{1}{\mu ^{n}}v\right\Vert ^{2},\forall n\in \mathbb{N}.
\end{equation*}

In accordance with Hille-Iosida Theorem \cite{Hille&Philips} there exist
positive constants $K,a,$such that~
\begin{equation*}
\left\Vert R^{n}\left( \mu \right) \right\Vert \leq \frac{K_{1}}{\left( \mu
-a\right) ^{n}}\,\,\forall \mu >a,\,\,\forall n\in \mathbb{N},
\end{equation*}%
Therefore
\begin{eqnarray*}
\left\Vert \mathcal{R}_{0}^{n}\left( \mu \right) \left(
\begin{array}{c}
x \\
v%
\end{array}%
\right) \right\Vert ^{2} &\leq &\frac{K^{2}}{\left( \mu -a\right) ^{2n}}%
\left\Vert x\right\Vert ^{2}+\frac{1}{\mu ^{2n}}\left\Vert v\right\Vert
^{2}\leq \\
&\leq &\frac{K_{1}^{2}}{\left( \mu -a\right) ^{2n}}\left( \left\Vert
x\right\Vert ^{2}+\left\Vert v\right\Vert ^{2}\right) , \\
\forall \mu &>&a,\forall n\in \mathbb{N}.
\end{eqnarray*}%
where $K_{1}=\max \{K,1\}.$Hence

\begin{equation*}
\left\Vert \mathcal{R}_{0}^{n}\left( \mu \right) \right\Vert \leq \frac{K_{1}%
}{\left( \mu -a\right) ^{n}},\,\,\forall n\in \mathbb{N},\forall \mu >a.
\end{equation*}%
It shows~\cite{Hille&Philips} that the operator $\mathcal{A}_{0}$ generates $%
C_{0}$-semigroups.

Denote $\mathcal{B}$ $=\left(
\begin{array}{cc}
0 & b_{1} \\
0 & 0%
\end{array}%
\right) .$ Obviously the operator $\mathcal{B}$ is bounded and $\mathcal{A=A}%
_{0}+\mathcal{B}.$

It is well-known \cite{Hille&Philips}, that if the operator $\mathcal{A}_{1}$
generates $C_{0}$-semigroup and the operator $\mathcal{B}$ is bounded, then
the operator $\mathcal{A}=\mathcal{A}_1+\mathcal{B}$ also generates $C_{0}$%
-semigroup.

It proves the lemma.

\textbf{Remark. } Obviously, if equation (\ref{1.1}) is exact
null-controllable on $\left[ 0,t_{1}\right] $ by smooth controls $v\left(
\cdot \right) \in H_{a0}^{1}\left[ 0,t_{1}\right] ,$ then, obviously, it is
exact null-controllable on $\left[ 0,t_{1}\right] $ by $v\left( \cdot
\right) \in L_{2}\left[ 0,t_{1}\right]$. According to Theorem \ref{T2.1} the
family (\ref{2.4}) of exponents should be strongly minimal. Surely it is
impossible, if $0\in \sigma _{1}$ and $b_{10}=0.$ Hence in the case of $0\in
\sigma _{1}$ it makes sense to consider only the condition $b_{10}\neq 0. $

\begin{theorem}
\label{T3.1}Equation (\ref{1.1}) is exact null-controllable on $\left[
0,t_{1}\right] $ by smooth controls $v\left( \cdot \right) \in H_{a0}^{1}%
\left[ 0,t_{1}\right] $, if and only then either $0\notin \sigma _{1}$
or $0\in \sigma _{1}$ and $b_{10}\neq 0$, and family
\begin{equation}
\left\{ \ 1,\frac{b_{1j}}{\lambda _{j}}e^{-\lambda _{1j}t},j=1,2,...\right\}
\label{3.1}
\end{equation}
of exponents is strongly minimal. 
\end{theorem}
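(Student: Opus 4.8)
The plan is to reduce exact null-controllability of (\ref{1.1}) by smooth controls $v(\cdot)\in H_{a0}^{1}[0,t_{1}]$ to exact null-controllability \emph{by square-integrable controls} of the augmented equation generated by the operator $\mathcal{A}$ of Lemma~\ref{L3.1}, and then to apply Theorem~\ref{T2.1}. The first step: given $v\in H_{a0}^{1}[0,t_{1}]$, put $w=v^{\prime}\in L_{2}[0,t_{1}]$; a direct computation with the semigroup of $\mathcal{A}$ shows that $\tilde{x}(t)=\left(\begin{array}{c}x_{1}(t,x_{1}^{0},v)\\v(t)\end{array}\right)$ is the mild solution of $\dot{\tilde{x}}=\mathcal{A}\tilde{x}+\left(\begin{array}{c}0\\1\end{array}\right)w$ with $\tilde{x}(0)=\left(\begin{array}{c}x_{1}^{0}\\a\end{array}\right)$, and conversely the second coordinate of a mild solution driven by any $w\in L_{2}[0,t_{1}]$ is the absolutely continuous function $a+\int_{0}^{t}w(s)\,ds$, whose first coordinate solves (\ref{1.1}). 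Since $\tilde{x}(t_{1})=0$ is the same as $x_{1}(t_{1},x_{1}^{0},v)=0$ together with $v(t_{1})=0$, equation (\ref{1.1}) will be exact null-controllable on $[0,t_{1}]$ by controls of $H_{a0}^{1}[0,t_{1}]$ if and only if the augmented equation is exact null-controllable on $[0,t_{1}]$ by scalar $L_{2}$-controls.

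\textbf{Putting $\mathcal{A}$ into the framework.} Next one verifies that $\mathcal{A}$ meets the standing assumptions. By Lemma~\ref{L3.1} it generates a $C_{0}$-semigroup; being block upper-triangular, $\sigma(\mathcal{A})=\sigma_{1}\cup\{0\}$ is a pure point set with no finite limit point, each $\lambda_{j}\neq0$ staying a simple eigenvalue with eigenvector $\left(\begin{array}{c}\varphi_{j}\\0\end{array}\right)$. For the eigenvalue $0$ a kernel computation shows: if $0\notin\sigma_{1}$ it is simple with eigenvector $\left(\begin{array}{c}-A_{1}^{-1}b_{1}\\1\end{array}\right)$; if $0\in\sigma_{1}$ the kernel of $\mathcal{A}$ is one-dimensional \emph{precisely when $b_{10}\neq0$} (spanned by $\left(\begin{array}{c}\varphi_{0}\\0\end{array}\right)$), in which case $0$ has algebraic multiplicity $2$ with a Jordan chain of length $2$, whereas $b_{10}=0$ makes that kernel two-dimensional. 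So the geometric-multiplicity-one requirement holds exactly under the dichotomy in the statement, and the left-over case $b_{10}=0$, $0\in\sigma_{1}$ is in any case excluded by the Remark after Lemma~\ref{L3.1} (there $g_{0}\equiv0$ in (\ref{2.4})). Finally the generalized-eigenvector system of $\mathcal{A}$ is shown to be a Riesz basis of $X_{1}\times\mathbb{C}$ by applying the bounded, boundedly invertible ``shear'' $\left(\begin{array}{c}x\\c\end{array}\right)\mapsto\left(\begin{array}{c}x-c\,\xi_{0}\\c\end{array}\right)$, with $\xi_{0}$ a fixed vector ($-A_{1}^{-1}b_{1}$, resp.\ a solution of $A_{1}\xi_{0}=b_{10}\varphi_{0}-b_{1}$ when $0\in\sigma_{1}$): it carries that system onto $\{\varphi_{j}\oplus0\}_{j}\cup\{0\oplus1\}$, which is a Riesz basis of $X_{1}\times\mathbb{C}$ since $\{\varphi_{j}\}$ is one of $X_{1}$.

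\textbf{Reading off the generalized exponents.} It then remains to compute the family (\ref{2.1}) for $\mathcal{A}$ and the control vector $\left(\begin{array}{c}0\\1\end{array}\right)$. Working with $\mathcal{A}^{\ast}=\left(\begin{array}{cc}A_{1}^{\ast}&0\\(\cdot,b_{1})&0\end{array}\right)$, the eigenvector at $\lambda_{j}\neq0$ is $\left(\begin{array}{c}\psi_{j}\\\overline{b_{1j}/\lambda_{j}}\end{array}\right)$ (which fulfils the normalization (\ref{2.0})), so its pairing with the control vector is $b_{1j}/\lambda_{j}$ and the corresponding exponent is $\frac{b_{1j}}{\lambda_{j}}e^{-\lambda_{1j}t}$; at $0$ the pairing is the constant $1$ when $0\notin\sigma_{1}$, and when $0\in\sigma_{1}$ the dual Jordan chain of $\mathcal{A}^{\ast}$, normalized by (\ref{2.0}), produces through (\ref{2.1}) (with $\alpha_{0}=2$) the two generalized exponents $b_{10}$ and $-b_{10}t$. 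Since strong minimality of a family is unchanged when its members are multiplied by nonzero constants, and more generally under a boundedly invertible transformation of the coefficient sequence, the generalized-exponent family of $\mathcal{A}$ is strongly minimal in $L_{2}([0,t_{1}])$ if and only if the family (\ref{3.1}) is strongly minimal, supplemented in the case $0\in\sigma_{1}$ by the monomial $t$ coming from the $2\times2$ Jordan block. Combined with Theorem~\ref{T2.1} and the first step, this yields the theorem.

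\textbf{The hard part.} The delicate point is the case $0\in\sigma_{1}$: even though $0$ is only a simple eigenvalue of $A_{1}$, it turns into a length-two Jordan block of $\mathcal{A}$, so one must (i) pin down $b_{10}\neq0$ as exactly the condition keeping the geometric multiplicity at $0$ equal to one (and the generalized eigenvectors a Riesz basis), and (ii) perform the normalization (\ref{2.0}) for the dual Jordan chain of $\mathcal{A}^{\ast}$ and extract the entries of (\ref{2.1}) correctly. The Riesz-basis step additionally needs the routine verification that the shear is bounded with bounded inverse, i.e.\ that $\xi_{0}\in X_{1}$, which is true because $(b_{1j})\in\ell^{2}$ and $|\lambda_{j}|$ stays bounded away from $0$ for $j\geq1$.
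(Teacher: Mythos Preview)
Your overall approach mirrors the paper's: rewrite control by smooth $v\in H^{1}_{a0}[0,t_{1}]$ as control by $w=v'\in L_{2}[0,t_{1}]$ of the augmented system with generator $\mathcal{A}$ and input vector $\mathfrak{b}=(0,1)^{T}$, verify that $\mathcal{A}$ satisfies the standing assumptions, compute the eigenvectors of $\mathcal{A}^{*}$ and the associated exponents, and invoke Theorem~\ref{T2.1}. You are in fact more thorough than the paper at two points: you actually argue (via the shear) that the generalized eigenvectors of $\mathcal{A}$ form a Riesz basis of $X_{1}\times\mathbb{C}$, whereas the paper only asserts that $\mathcal{A}$ ``satisfies the assumption on page~\ref{Basic assumptions}''; and you notice that when $0\in\sigma_{1}$ the eigenvalue $0$ of $\mathcal{A}$ acquires algebraic multiplicity two, with a length-two Jordan chain.

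This last observation, however, opens a gap that you do not close. In the case $0\in\sigma_{1}$ the paper simply computes the single eigenvector $(0,1)^{T}$ of $\mathcal{A}^{*}$ at $0$ and declares that the resulting exponential family is exactly (\ref{3.1}); the Jordan block is never mentioned, so no monomial $t$ appears. Your (more accurate) spectral computation produces two generalized exponents at $0$, a constant and $t$ (up to the nonzero factor $b_{10}$), so that Theorem~\ref{T2.1} yields as the necessary and sufficient condition the strong minimality of
\[
\bigl\{\,1,\ t,\ \tfrac{b_{1j}}{\lambda_{j}}e^{-\lambda_{j}t}\ :\ j\ge 1\,\bigr\},
\]
\emph{not} of (\ref{3.1}). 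You then write ``this yields the theorem'' without explaining why strong minimality of this enlarged family is equivalent to strong minimality of (\ref{3.1}); that equivalence is not automatic, and it is not covered by your ``multiply members by nonzero constants / boundedly invertible transformation of the coefficient sequence'' remark, since the two families have different index sets. So for $0\notin\sigma_{1}$ your argument matches the paper and proves exactly the stated theorem, but for $0\in\sigma_{1}$ your own analysis leads to a condition that does not coincide with the one in the statement, and the final identification is missing.
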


\begin{proof}
One can write system (\ref{1.1})) governed by smooth control $v\left( \cdot
\right) \in H_{\alpha }^{1}\left[ 0,t_{1}\right] ,$ by
\begin{align}
\dot{x}_{1}\left( t\right) & =A_{1}x_{1}\left( t\right) +b_{1}v\left(
t\right) ,x_{1}\left( 0\right) =x_{1}^{0},  \label{3.1.1} \\
\dot{v}\left( t\right) & =u\left( t\right) ,v\left( 0\right) =\alpha ,~0\leq
t<+\infty ,  \label{3.1.2}
\end{align}%
As far as the operator $\mathcal{A}$ generates a strongly continuous $C_{0}$%
-semigroup, composite system (\ref{3.3.1})--(\ref{3.1.2}) can be written in
the form of (\ref{1.1}) as follows:
\begin{equation}
\dot{z}\left( t\right) =\mathcal{A}z\left( t\right) +\mathfrak{b}v\left(
t\right) ,  \label{3.2}
\end{equation}%
\bigskip where $z=\left(
\begin{array}{c}
x \\
v%
\end{array}%
\right) \in X_{1}\times \mathbb{C},~$the operator $\mathcal{A}$~is defined
in Lemma \ref{3.1}, $\mathfrak{b}=\left(
\begin{array}{c}
0 \\
1%
\end{array}%
\right) .$

Let $\sigma ({\mathcal{A}})$ be the spectrum of the operator $\mathcal{A}.$
We have $\sigma ({\mathcal{A}})=\sigma _{1}\cup \{0\},$ $0\notin \sigma .$
One can see that the operator $\mathcal{A}$ satisfies the assumption on page %
\ref{Basic assumptions}

Denote by $\mathcal{\psi }=\left(
\begin{array}{c}
\psi ^{1} \\
\psi ^{2}%
\end{array}%
\right) \in X\times \mathbb{C}$ the eigenvector of the adjoint operator $%
\mathcal{A}^{\ast }$ corresponding to the eigenvalue $\lambda \in \sigma ({\
\mathcal{A}}).$ We have
\begin{equation*}
\mathcal{A}^{\ast }\mathbb{=}\left(
\begin{array}{cc}
A_{1}^{\ast } & 0 \\
b_{1}^{\ast } & \ 0%
\end{array}%
\right) ,
\end{equation*}%
where $b_{1}^{\ast }$ is a linear functional from $X_{1}~$to $\mathbb{C},~$
defined by 
\begin{equation*}
b_{1}^{\ast }x=\left( b_{1},x\right) ,\forall x\in X_{1}.
\end{equation*}

The eigenvalues and corresponding eigenvectors of the operator $\mathcal{A}%
^{\ast }$ are defined as follows:
\begin{equation}
\left( \overline{\lambda }I-\mathcal{A}^{\ast }\right) \mathcal{\psi }%
=\left(
\begin{array}{c}
\left( \overline{\lambda }I-A_{1}^{\ast }\right) \psi ^{1} \\
-b^{\ast }\psi ^{1}+\overline{\lambda }\psi ^{2}%
\end{array}%
\right) =\left(
\begin{array}{c}
0 \\
0%
\end{array}%
\right) .  \label{3.3}
\end{equation}%
Equality (\ref{3.3}) holds if and only if
\begin{equation}
\left( \overline{\lambda }I-A^{\ast }\right) \psi ^{1}=0,-b^{\ast }\psi ^{1}+%
\overline{\lambda }\psi ^{2}=0.  \label{3.3.01}
\end{equation}

If $\lambda \in \sigma _{1},$ and $\lambda \neq 0,~$ then from (\ref{3.3})
it follows, that if $\psi ^{1}=0,~$then $\psi ^{2}=0$ as well, but it is
impossible, because $\mathcal{\psi }$ is an eigenvector. Hence $\psi ^{1}~$%
is an eigenvector of $A_{1}^{\ast }$ and in accordance with the theorem
conditions and $\psi ^{2}=\frac{1}{{\lambda }}\left( b_{1},\psi ^{1}\right)
. $ In this case the eigenvectors $\mathcal{\psi }_{\lambda }$ of the
operator $\mathcal{A}^{\ast }$ corresponding to its eigenvalue $\lambda \in
\sigma _{1},$ $\lambda \neq 0$ are defined as follows:
\begin{equation*}
\mathcal{\psi }_{\lambda }=\left(
\begin{array}{c}
\psi ^{1} \\
\frac{1}{\lambda }\left( b,\psi ^{1}\right)%
\end{array}%
\right) ,
\end{equation*}%
where $\psi ^{1}$ is an eigenvector of $A_{1}^{\ast }$, corresponding to an
eigenvalue $\lambda \in \sigma .$

\textbf{1}. Let's continue to prove the theorem for the case $0\notin \sigma
_{1}$.

If $\lambda =0,$ then $~$ $0$ is a regular value for $A_{1},$ so from (\ref%
{3.3.01}) it follows, that $\psi ^{1}=0$, and therefore $\psi ^{2}$ may be
any nonzero constant. One can set $\psi ^{2}=1$. Therefore in this case the
eigenvalues and corresponding eigenvectors of the operator $\mathcal{A}%
^{\ast }$ are defined as follows:
\begin{equation*}
\mathcal{\psi }_{0}=\left(
\begin{array}{c}
0 \\
1%
\end{array}%
\right) .
\end{equation*}

Summary: let $\lambda _{j},$ $j=0,1,2,\dots $ $\in \sigma _{1}\cup \left\{
0\right\} =\sigma \left( \mathcal{A}\right) $ enumerated by increasing of
their absolute values.


The eigenvectors $\mathcal{\psi }_{j},j=0,1,2,...$ of the operator $\mathcal{%
A}^{\ast }$ are
\begin{equation}
\mathcal{\psi }_{j}=\left(
\begin{array}{c}
\psi _{j}^{1} \\
\frac{b_{1j}}{\lambda _{j}}%
\end{array}%
\right) ,\,\,j=1,2,\dots ,\mathcal{\psi }_{0}=\left(
\begin{array}{c}
0 \\
1%
\end{array}%
\right) ,\,  \label{3.3.1}
\end{equation}%
where $\psi _{j}^{1},j=1,2,\dots ,$ are eigenvectors of the operator $%
A^{\ast }$.

From (\ref{3.3.1})~it follows, that
\begin{equation}
\left( \mathfrak{b},\mathcal{\psi }_{j}\right) =\left\{
\begin{array}{cc}
1, & j=0, \\
\frac{b_{1j}}{\lambda _{j}}, & j=1,2,\dots .%
\end{array}%
\right. ~  \label{3.3.2}
\end{equation}

Hence one can see that the sequence (\ref{2.3}) of generalized exponents for
system (\ref{3.2}) is exactly the sequence (\ref{3.1}).

By Theorem \ref{T2.1} the exact null-controllability of system (\ref{3.2})
holds true if and only if family (\ref{3.1}) of exponents is strongly
minimal.

The exact null-controllability of system (\ref{3.2}) is completely
equivalent to the exact null-controllability of equation (\ref{1.1}) on $%
\left[ 0,t_{1}\right] ,$ $t_{1}>0$ by smooth controls.

It proves the theorem for the case $0\notin \sigma _{1}$.

\textbf{2}. Prove the theorem for the case $0\in \sigma _{1}$.

Let $\mathcal{\psi }_{0}=\left(
\begin{array}{c}
\psi _{0}^{1} \\
\psi _{0}^{2}%
\end{array}%
\right) $ be an eigenvector of the operator $\mathcal{A}^{\ast },$
corresponding the eigenvalue $\lambda _{0}=0.$ In accordance with (\ref%
{3.3.01}) from (\ref{3.3}) it follows, that%
\begin{equation}
\left( -A_{1}^{\ast }\right) \psi _{0}^{1}=0,-\left( b_{1},\psi
_{0}^{1}\right) =0..  \label{3.3.3}
\end{equation}
Let $\psi _{0}^{1}\neq 0.$ From (\ref{3.3.3}) it follows, that $\psi _{0}$
is an eigenvector of the operator $A_{1}^{\ast },$ corresponding to the
eigenvalue $\lambda _{0}=0,$ and $b_{10}=\left( b_{1},\psi _{0}^{1}\right)
=0,\psi _{0}^{1}\neq 0.$ This contradicts to theorem conditions.

Hence, $\psi _{0}^{1}=0,$ so again corresponding eigenvector $\psi _{0}$ of
the operator $\mathcal{A}^{\ast }$ is defined as well as in the case $%
0\notin \sigma _{1},$ i.e.
\begin{equation*}
\mathcal{\psi }_{0}=\left(
\begin{array}{c}
0 \\
1%
\end{array}%
\right) ,
\end{equation*}%
Hence again $\left( \mathfrak{b},\mathcal{\psi }_{j}\right) ,~j=1,2,\dots $
is defined by (\ref{3.3.2})

The proof is finished as well as in the case $0\notin \sigma _{1}.$
\end{proof}

\subsection{Controllability criterion of interconnected equations by force
motion}

\begin{definition}
\label{D3.1}Interconnected system (\ref{1.1})--(\ref{1.3}) is said to be
exact null-controllable on $[0,t_{1}]$ if for each $x_{1}^{0}\in
X_{1},~x_{20}\in X_{2}$ there exists a control $u\left( \cdot \right) \in
L_{2}\left[ 0,t_{1}\right] ,$ such that a mild solution\ $x_{1}\left(
t,x_{1}^{0},v\left( \cdot \right) \right) $ of equation (\ref{1.1}) with a
control $v\left( t\right) $ defined by (\ref{1.2}) satisfies the condition
\begin{equation}
x_{1}\left( t,x_{1}^{0},v\left( \cdot \right) \right) =0.  \label{3.4}
\end{equation}
\end{definition}

\subsubsection{Regular case $\left( c,b_{2}\right) \neq 0$}

\begin{theorem}
\label{T3.2}If

\begin{itemize}
\item the family of exponents (\ref{3.1}) is strongly minimal,

\item $c\in D\left( A_{2}^{\ast }\right)$ and $\left( c,b_{2}\right) \neq 0,$
\end{itemize}

then interconnected equation (\ref{1.1})--(\ref{1.3}) is exact
null-controllable on $[0,t_{1}]$.
\end{theorem}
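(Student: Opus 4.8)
The plan is to decouple the two subsystems: I would reduce the exact null-controllability of the interconnected object to the exact null-controllability of equation (\ref{1.1}) by \emph{smooth} controls, a property already characterized in Theorem \ref{T3.1}, and then show that any admissible smooth control for (\ref{1.1}) can be produced as the output $v(t)=(c,x_{2}(t))$ of equation (\ref{1.3}) by a suitable control $u(\cdot)\in L_{2}[0,t_{1}]$.

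Concretely, I would first fix arbitrary initial data $x_{1}^{0}\in X_{1}$, $x_{2}^{0}\in X_{2}$ and set $a=(c,x_{2}^{0})$; the coupling (\ref{1.2}) forces $v(0)=a$. Since the family (\ref{3.1}) is strongly minimal, Theorem \ref{T3.1} supplies a smooth control $v(\cdot)\in H_{a0}^{1}[0,t_{1}]$ for which the mild solution of (\ref{1.1}) satisfies $x_{1}(t_{1},x_{1}^{0},v(\cdot))=0$. It then remains only to realize this particular $v(\cdot)$ in the form $v(t)=(c,x_{2}(t))$ for an $L_{2}$ control of (\ref{1.3}).

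The core step is exactly this realization, and it is where the remaining hypotheses are used. Because $c\in D(A_{2}^{\ast})$, for every mild solution $x_{2}(\cdot)$ of (\ref{1.3}) the scalar function $t\mapsto(c,x_{2}(t))$ is absolutely continuous \cite{Balakrishnan} with $\frac{d}{dt}(c,x_{2}(t))=(A_{2}^{\ast}c,x_{2}(t))+(c,b_{2})\,u(t)$ a.e. Substituting the variation-of-constants formula $x_{2}(t)=S_{2}(t)x_{2}^{0}+\int_{0}^{t}S_{2}(t-s)b_{2}u(s)\,ds$, and using that for $c\in D(A_{2}^{\ast})$ one has $(A_{2}^{\ast}c,S_{2}(\tau)b_{2})=(S_{2}^{\ast}(\tau)A_{2}^{\ast}c,b_{2})$ with $\tau\mapsto S_{2}^{\ast}(\tau)A_{2}^{\ast}c$ continuous, the demand $(c,x_{2}(t))\equiv v(t)$ becomes (after one differentiation; the value at $t=0$ matches automatically because $v(0)=a$) a Volterra integral equation of the second kind for $u$, with continuous, hence bounded, kernel $k(t,s)=(S_{2}^{\ast}(t-s)A_{2}^{\ast}c,b_{2})$ and free term $\frac{1}{(c,b_{2})}\bigl(\dot v(t)-(A_{2}^{\ast}c,S_{2}(t)x_{2}^{0})\bigr)\in L_{2}[0,t_{1}]$; here $(c,b_{2})\neq0$ is precisely what makes the leading coefficient invertible. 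Such an equation has a unique solution $u(\cdot)\in L_{2}[0,t_{1}]$ (Neumann series in the Volterra operator), and for that $u$ the function $w(t):=(c,x_{2}(t,x_{2}^{0},u(\cdot)))$ is absolutely continuous, has $w(0)=v(0)$, and satisfies $\dot w=\dot v$ a.e.\ by the very equation just solved, so $w\equiv v$ on $[0,t_{1}]$. Feeding this $u(\cdot)$ into (\ref{1.3}) therefore steers (\ref{1.1})--(\ref{1.3}) along the same trajectory of $x_{1}$ as in Theorem \ref{T3.1}, so $x_{1}(t_{1})=0$; since $x_{1}^{0},x_{2}^{0}$ were arbitrary, the claim follows.

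The main obstacle I anticipate is the realization step, and inside it the point that the $L_{2}$ control obtained from the Volterra equation genuinely reproduces $v$ rather than merely a function agreeing with $v$ up to an a.e.\ derivative: this needs both the absolute continuity of $t\mapsto(c,x_{2}(t))$ — available precisely because $c\in D(A_{2}^{\ast})$ — and the automatic matching of initial values $(c,x_{2}^{0})=a=v(0)$. The remaining ingredients enter only through Theorem \ref{T3.1} (strong minimality of (\ref{3.1})) and through the solvability of the Volterra equation ($(c,b_{2})\neq0$), so those parts are routine.
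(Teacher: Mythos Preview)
Your proof is correct and follows the same two-step architecture as the paper: first invoke Theorem~\ref{T3.1} to obtain a smooth control $v\in H^{1}_{a0}[0,t_{1}]$ with $a=(c,x_{2}^{0})$ steering $x_{1}$ to zero, then realize this $v$ as the output $(c,x_{2}(t))$ of \eqref{1.3} via a Volterra equation of the second kind, using $c\in D(A_{2}^{\ast})$ for continuity of the kernel and $(c,b_{2})\neq0$ for invertibility of the leading coefficient. The only difference is in the bookkeeping of the Volterra step: you differentiate the output identity once and solve a second-kind equation directly for $u\in L_{2}$, whereas the paper writes the first-kind convolution equation \eqref{3.7} for $u$, passes to the second-kind equation \eqref{3.8} for the primitive $U(t)=\int_{0}^{t}u(s)\,ds$, solves for $U$, and then recovers $u=\dot U\in L_{2}$. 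Your route is slightly more direct and also keeps the free-evolution contribution $(A_{2}^{\ast}c,\,S_{2}(t)x_{2}^{0})$ explicitly in the forcing term, which the paper's write-up of \eqref{3.7}--\eqref{3.8} suppresses; otherwise the two arguments coincide.
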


\begin{proof}
Let $c\in D\left( A_{2}^{\ast }\right) $ and $\left( c,b_{2}\right) \neq 0.$
As much as family (\ref{3.1}) of exponents is strongly minimal, then,
according to Theorem \ref{T3.1}, for any $x_{10}\in X_{1}$ and $\alpha \in
\mathbb{C}~\ \ \ $there exists a control \ $v\left( \cdot \right) \in
H_{\alpha 0}^{1}\left[ 0,t_{1}\right] ~$such that (\ref{3.4}) holds, and
vice versa. Hence if any function $v\left( \cdot \right) \in H_{\alpha 0}%
\left[ 0,t_{1}\right] \ ~$can be expressed by

\begin{equation}
v\left( t\right) -v\left( 0\right) =\left( c,x_{2}\left( t\right) \right)
,t\in \left[ 0,t_{1}\right] ,  \label{3.5}
\end{equation}%
where \ \
\begin{equation}
x_{2}\left( t\right) =S_{2}(t)x_{20}+\int_{0}^{t}S_{2}\left( t-\tau \right)
b_{2}u\left( \tau \right) d\tau ,  \label{3.6}
\end{equation}%
then interconnected system (\ref{1.1})--(\ref{1.3}) is exact
null-controllable on $\left[ 0,t_{1}\right] $.

Obviously any function $v\left( \cdot \right) \in H_{a0}^{1}\left[ 0,t_{1}%
\right] $ can be expressed by (\ref{3.5}) if and only if the Volterra
integral equation of the first kind with continuous kernel $K\left( t-\tau
\right) =\left( c,S_{2}\left( t-\tau \right) b_{2}\right) $

\begin{equation}
w\left( t\right) =\int_{0}^{t}\left( c,S_{2}\left( t-\tau \right)
b_{2}\right) u\left( \tau \right) d\tau ,t\in \left[ 0,t_{1}\right] ,
\label{3.7}
\end{equation}%
where $w(t)=v\left( t\right) -v\left( 0\right) ,~v\left( 0\right) =\left(
c,x_{20}\right) $, has a solution \ $u\left( \cdot \right) \in L_{2}\left[
0,t_{1}\right] $ for any $w\left( \cdot \right) \in H_{0}^{1}\left[ 0,t_{1}%
\right] .$


One can use some classical conditions for the existence of solutions for
equation (\ref{3.7}) \cite{Tricomi}. One of them are:

\textbf{1) }$w\left( t\right) $ is continuously differentiable and \ $%
w\left(0\right) =0,$

\textbf{2)} the kernel $K\left( t-\tau \right) =\left( c,S_{2}\left( t-\tau
\right) b_{2}\right) $ of equation (\ref{3.7}) is continuously
differentiable and \ $K\left( t,t\right) =\left( c,b_{2}\right) \neq 0.$

Since $c\in D\left( A_{2}^{\ast }\right) ,$we have$\frac{\partial }{\partial
\tau }K\left( t-\tau \right) =-\left( S_{2}^{\ast }\left( t-\tau \right)
A_{2}^{\ast }c,b_{2}\right) $ to be continuous \cite{Hille&Philips, Krein},
hence if absolutely continuous function $v\left( t\right) $ appears to be
continuously differentiable, there exists a continuous solution of equation (%
\ref{3.7})\cite{Tricomi}.


If absolutely continuous function $v\left( t\right) $ is not continuously
differentiable, we consider the integral Volterra equation of the second kind%
\begin{equation}
w\left( t\right) =v\left( t\right) -v\left( 0\right) =\left( c,b_{2}\right)
U\left( t\right) +\int_{0}^{t}\left( S_{2}^{\ast }\left( t-\tau \right)
A_{2}^{\ast }c,b_{2}\right) U\left( \tau \right) d\tau .  \label{3.8}
\end{equation}

Because of \ $c\in D\left( A_{2}^{\ast }\right) $ the kernel $\ \
K_{1}\left( t-\tau \right) =\left( S_{2}^{\ast }\left( t-\tau \right)
A_{2}^{\ast }c,b_{2}\right) $ is continuous \cite{Hille&Philips, Krein},
hence equation (\ref{3.8}) has a continuous solution $U\left( t\right) $ for
any continuous function \ $v\left( t\right) $ \cite{Tricomi}. This solution
is obtained by \cite{Tricomi}
\begin{equation}
U\left( t\right) =\frac{w\left( t\right) }{\left( A_{2}^{\ast
}c,b_{2}\right) }+\int_{0}^{t}R\left( t-\tau \right) v\left( \tau \right)
d\tau ,  \label{3.8.1}
\end{equation}%
where \ \ $R\left( t\right) ~$is the resolvent of equation (\ref{3.8}),
obtained by \cite{Tricomi}

\begin{equation}
R\left( t-\tau \right) =\sum_{n=0}^{\infty }K_{n+1}\left( t-\tau \right)
,~0\leq \tau \leq t\leq t_{1},  \label{3.8.2}
\end{equation}%
where $K_{n+1}\left( t-\tau \right) $ are repeated kernels defined by the
recurrence%
\begin{equation}
K_{n+1}\left( t-\tau \right) =\int_{\tau }^{t}K_{1}\left( t-\theta \right)
K_{n}\left( \theta -\tau \right) d\tau ,n=1,2,...  \label{3.8.3}
\end{equation}%
and the series (\ref{3.8.1}) converges uniformly. Hence $R\left( t-\tau
\right) $ is continuous, so from (\ref{3.8.1}) it follows, that the function
$U\left( t\right) $ is absolutely continuous function, i.e. there exists an
integrable function $u\left( t\right) ,$ such that $u\left( t\right) =\dot{U}%
\left( t\right) \ \mathrm{a.e.~for}~t\in \left[ 0,t_{1}\right] ,$ and $%
U\left( 0\right) =0$. Actually because of square integrability of $\dot{v}%
\left( t\right) $ the function $u\left( t\right) ~$appears to be square
integrable. Using the integrating by parts we obtain by (\ref{3.4}) and
taking into account the condition $U\left( 0\right) =0$

\begin{eqnarray}
\left( c,b_{2}\right) U\left( t\right) +\int_{0}^{t}\left( S_{2}^{\ast
}\left( t-\tau \right) A_{2}^{\ast }c,b_{2}\right) U\left( \tau \right)
d\tau &=&  \label{3.9} \\
=\int_{0}^{t}\left( c,S_{2}\left( t-\tau \right) b_{2}\right) u\left( \tau
\right) d\tau &=&w\left( t\right) ,  \notag
\end{eqnarray}%
i.e. (\ref{3.7}) holds for a function $u\left( t\right) \in L_{2}\left[
0,t_{1}\right] .$

This proves the theorem.
\end{proof}

\begin{remark}
A square integrable control $u(t)$ is a square integrable
first derivative of an absolutely continuous solution $U\left( t\right) $ of
the integral Volterra equation (\ref{3.8}) of the second kind, where $%
v\left( t\right) \in H_{\alpha 0}^{1}\left[ 0,t_{1}\right] $ is a control
satisfying condition (\ref{3.0}) (see Definition \ref{D3.3}).
\end{remark}

\subsubsection{Singular case $\left( c,b_{2}\right) =0$}

If $\left( c,b_{2}\right) =0,$ then above consideration are not applicable,
because equation (\ref{3.8}) appears to be an Volterra equation

\begin{equation*}
w\left( t\right) =\int_{0}^{t}\left( S_{2}^{\ast }\left( t-\tau \right)
A_{2}^{\ast }c,b_{2}\right) U\left( \tau \right) d\tau .
\end{equation*}%
of the first kind with continuous kernel $K_{1}\left( t-\tau \right) =\left(
S_{2}^{\ast }\left( t-\tau \right) A_{2}^{\ast }c,b_{2}\right) $. However if
$c\in D\left( A_{2}^{\ast 2}\right) $ and $\left( A_{2}^{\ast
}c,b_{2}\right) \neq 0,$ then the proof of Theorem can be used, if
everywhere in the proof to replace the vector $c\in D\left( A_{2}^{\ast
}\right) $ by the vector $A_{2}^{\ast }c\in D\left( A_{2}^{\ast }\right) .$%
By this way the following results can be obtained.

\begin{definition}
\label{D3.2}Interconnected system (\ref{1.1})--(\ref{1.3}) is said to be
exact null-controllable on $[0,t_{1}]$ by distributions, if for each $%
x_{1}^{0}\in X_{1},~x_{20}\in X_{2}$ there exists a distribution
(generalized control) $u\left( \cdot \right) ,$ such that a mild solution\ $%
x_{1}\left( t,x_{1}^{0},v\left( \cdot \right) \right) $ of equation (\ref%
{1.1}) with a control $v\left( t\right) $ defined by (\ref{1.2}) satisfies
the condition
\begin{equation*}
x_{1}\left( t,x_{1}^{0},v\left( \cdot \right) \right) =0.
\end{equation*}
\end{definition}
\newpage
\begin{theorem}
\label{T3.3}If

\begin{itemize}
\item the family of exponents%
\begin{equation*}
\left\{ 1,\frac{b_{1j}}{\lambda _{j}}e^{-\lambda _{1j}t}\
,\,\,j=1,2,...\right\}
\end{equation*}%
is strongly minimal,

\item $c\in D\left( A_{2}^{\ast ^{2}}\right) ,~\left( c,b_{2}\right) =0~$and$%
~\left( A_{2}^{\ast }c,b_{2}\right) \neq 0,$
\end{itemize}

then interconnected equation (\ref{1.1})--(\ref{1.3}) is exact
null-controllable on $[0,t_{1}]$ by distributions.
\end{theorem}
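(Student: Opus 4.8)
The plan is to mimic the structure of the proof of Theorem \ref{T3.2}, but with the vector $c$ systematically replaced by $A_2^{\ast}c$, and then to explain why the solution $U(\cdot)$ that arises now produces a \emph{distribution} rather than an $L_2$ control. First I would observe that, since the family (\ref{3.1}) is strongly minimal, Theorem \ref{T3.1} gives, for every $x_{10}\in X_1$ and every $\alpha\in\mathbb{C}$, a smooth control $v(\cdot)\in H^1_{\alpha 0}[0,t_1]$ driving (\ref{1.1}) to rest; so as before it suffices to realize an arbitrary such $v$ in the form $v(t)-v(0)=(c,x_2(t))$ with $x_2$ given by (\ref{3.6}). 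Setting $w(t)=v(t)-v(0)$ and differentiating the identity $w(t)=\int_0^t (c,S_2(t-\tau)b_2)u(\tau)\,d\tau$ (now using $c\in D(A_2^{\ast 2})$, so that $\tau\mapsto(c,S_2(t-\tau)b_2)$ is twice boundedly differentiable and $\frac{d}{d\tau}(c,S_2(t-\tau)b_2)=-(S_2^{\ast}(t-\tau)A_2^{\ast}c,b_2)$), one is led to the integral equation
\begin{equation*}
w(t)=\int_0^t \bigl(S_2^{\ast}(t-\tau)A_2^{\ast}c,\,b_2\bigr)\,U(\tau)\,d\tau ,
\end{equation*}
which, because $(c,b_2)=0$, is a Volterra equation of the \emph{first} kind. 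Its kernel $K_1(t-\tau)=(S_2^{\ast}(t-\tau)A_2^{\ast}c,b_2)$ is continuously differentiable with $K_1(t,t)=(A_2^{\ast}c,b_2)\neq 0$, since $c\in D(A_2^{\ast 2})$ gives $\frac{d}{d\tau}K_1(t-\tau)=(S_2^{\ast}(t-\tau)A_2^{\ast 2}c,b_2)$ continuous.

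The key step is then to reduce this first-kind equation to a second-kind one exactly as in Theorem \ref{T3.2}: provided $w$ is continuously differentiable with $w(0)=0$, differentiating once more converts it into
\begin{equation*}
\dot w(t)=(A_2^{\ast}c,b_2)\,U(t)+\int_0^t \bigl(S_2^{\ast}(t-\tau)A_2^{\ast 2}c,b_2\bigr)U(\tau)\,d\tau ,
\end{equation*}
a Volterra equation of the second kind with continuous kernel and nonvanishing coefficient $(A_2^{\ast}c,b_2)$, whose unique solution $U(\cdot)$ is given by the resolvent formula analogous to (\ref{3.8.1})--(\ref{3.8.3}) and is continuous, indeed absolutely continuous because $\dot w=\dot v\in L_2[0,t_1]$. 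Integration by parts then recovers the first-kind identity for $U$, and hence $w(t)=\int_0^t(c,S_2(t-\tau)b_2)\,\dot U(\tau)\,d\tau$, i.e. $u=\dot U$ realizes the desired $v$.

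The main obstacle — and the reason the conclusion is weakened to null-controllability \emph{by distributions} — is that an arbitrary $v\in H^1_{\alpha 0}[0,t_1]$ need only have $\dot v\in L_2$, hence $w=v-v(0)$ need not be continuously differentiable and $\dot w(0)$ need not vanish, so the passage from the first-kind to the second-kind equation cannot be done classically. I would handle this by working in the sense of distributions: extend $w$ by zero for $t<0$, so that $\dot w$ acquires a Dirac mass at $0$ with weight $w(0^+)$ (which is $0$ here since $v(t_1)=0$ is not the relevant endpoint — rather $w(0)=0$ by definition of $w$), and interpret the second-kind equation in $\mathcal{D}'[0,t_1]$; its resolvent being a genuine continuous function, convolution against it maps distributions of order $\le 1$ to distributions of order $\le 1$, yielding $U$ as such a distribution and $u=\dot U$ as a distribution (of order $\le 2$). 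One then verifies that the distributional solution $x_2$ of (\ref{1.3}) still satisfies $(c,x_2(t))=v(t)-v(0)$ pointwise — this is legitimate because pairing the (distributional) trajectory against $c\in D(A_2^{\ast 2})$ regularizes it, by the duality/mild-solution formula for (\ref{1.3}) tested against smooth vectors. Thus $u(\cdot)$ is an admissible distribution steering the interconnected system (\ref{1.1})--(\ref{1.3}) to the origin, and the theorem follows.
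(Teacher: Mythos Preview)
Your overall strategy---replace $c$ by $A_2^{\ast}c$ and reduce to a second-kind Volterra equation---is exactly the paper's idea, but your organization creates an avoidable difficulty and contains a slip. You differentiate the first-kind equation to put $\dot w$ on the left of the second-kind equation, then worry that $\dot w\in L_2$ need not be continuous and propose a distributional work-around. Along the way you assert that the resulting $U$ is ``absolutely continuous because $\dot w=\dot v\in L_2$''; this is false---from the resolvent formula $U(t)=\dot w(t)/(A_2^{\ast}c,b_2)+\int_0^tR(t-\tau)\dot w(\tau)\,d\tau$ one only gets $U\in L_2$, not $U\in AC$. Your argument still goes through (an $L_2$ solution of the second-kind equation suffices, and $u=\dot U$ is then a first-order distribution), but the detour through Dirac masses and $\mathcal D'$ is unnecessary.

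The paper avoids all of this by working one integration level lower: it writes the second-kind equation
\[
w(t)=(A_2^{\ast}c,b_2)\,U_1(t)+\int_0^t\bigl(S_2^{\ast}(t-\tau)A_2^{\ast 2}c,\,b_2\bigr)U_1(\tau)\,d\tau
\]
with $w$ itself (which \emph{is} continuous, indeed absolutely continuous) on the left. Classical Volterra theory then gives a continuous solution $U_1$; the resolvent formula together with $\dot w\in L_2$ shows $U_1$ is absolutely continuous with $\dot U_1\in L_2$. One honest integration by parts (using $U_1(0)=0$) converts this into the first-kind identity $w(t)=\int_0^t(S_2^{\ast}(t-\tau)A_2^{\ast}c,b_2)\,U(\tau)\,d\tau$ with $U:=\dot U_1\in L_2$, and only the final integration by parts---producing $u=\dot U$ in (\ref{3.7})---is genuinely distributional. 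So the ``obstacle'' you identify is self-inflicted: by solving for $U_1$ rather than $U$, the paper keeps everything classical until the last step.
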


\begin{proof}
Let $c\in D\left( A_{2}^{\ast ^{2}}\right) $~and $\left( A_{2}^{\ast
}c,b_{2}\right) \neq 0.$ Arguing as in the regular case, consider the
Volterra equation%
\begin{equation}
w\left( t\right) =v\left( t\right) -v\left( 0\right) =\left(
A_{2}c,b_{2}\right) U\left( t\right) +\int_{0}^{t}\left( S_{2}^{\ast }\left(
t-\tau \right) A_{2}^{\ast ^{2}}c,b_{2}\right) U\left( \tau \right) d\tau .
\label{3.10}
\end{equation}

%

Again because of $\left( A_{2}^{\ast }c,b_{2}\right) \neq 0$ and \ $c\in
D\left( A_{2}^{\ast ^{2}}\right) $ the kernel $\ \ K_{2}\left( t,\,\tau
\right) =\left( S_{2}^{\ast }\left( t-\tau \right) A_{2}^{\ast
^{2}}c,b_{2}\right) $ is continuous \cite{Hille&Philips, Krein}, hence
equation (\ref{3.10}) has a continuous solution $U_{1}\left( t\right) $ for
any continuous function \ $v\left( t\right) $ \cite{Tricomi}.

Since the function $w\left( t\right) $ is absolutely continuous with square
integrable derivative, from (\ref{3.10}) it follows that the function $%
U_{1}\left( t\right) $ is absolutely continuous, so as well as in the
regular case
\begin{eqnarray*}
w\left( t\right) &=&\left( A_{2}^{\ast }c,b_{2}\right) U_{1}\left( t\right)
\ +\int_{0}^{t}\left( S_{2}^{\ast }\left( t-\tau \right) A_{2}^{\ast
^{2}}c,b_{2}\right) U_{1}\left( \tau \right) d\tau = \\
&=&\int_{0}^{t}\left( S_{2}^{\ast }\left( t-\tau \right) A_{2}^{\ast
}c,b_{2}\right) U\left( \tau \right) d\tau ,
\end{eqnarray*}%
where $U_{1}\left( 0\right) =0,U\left( t\right) =\dot{U}_{1}\left( t\right) ~%
\mathrm{a.e.~for}~t\in \left[ 0,t_{1}\right] ,$and $U\left( \cdot \right)
\in L_{2}\left[ 0,t_{1}\right] \ $because of $\dot{v}\left( \cdot \right)
\in L_{2}\left[ 0,t_{1}\right] $. If to continue the integration by parts
for
\begin{equation*}
\int_{0}^{t}\left( S_{2}^{\ast }\left( t-\tau \right) A_{2}^{\ast
}c,b_{2}\right) U\left( \tau \right) d\tau ,
\end{equation*}%
we can only obtain (\ref{3.7}) for $u\left( t\right) $ which is understood
as a distribution (the first distributional(generalized) derivative of the
square integrable function $U\left( t\right) $ or the second distributional
derivative of the continuous function $U_{1}\left( t\right) ).$

This proves the theorem. 
\end{proof}

\textbf{\ }The same approach can be used, if $c\in D\left( A_{2}^{\ast
^{m}}\right) ,\left( A_{2}^{\ast ^{k}}c,b_{2}\right) =0,k=0,1,...,m-1,\left(
A_{2}^{\ast ^{m-1}}c,b_{2}\right) \neq 0$ for some $m\in \mathbb{N}.$

\textbf{Remark.} Using the Laplace Transform in (\ref{3.7}) we obtain for
sufficiently large $\func{Re}s$

\begin{equation}
W\left( s\right) =\left( c,\left( sI-A_{2}\right) ^{-1}b_{2}\right) U\left(
s\right)  \label{3.13}
\end{equation}

As much as $w\left( \cdot \right) \in H_{0t_{1}}^{1}\left[ 0,t_{1}\right] $
and \ $v\left( t\right) \equiv 0,t>t_{1},$ the Laplace Transform $W\left(
s\right) $ of the control $w\left( t\right) $ exists. Clearly, for each \ $%
W\left( s\right) $ equation (\ref{3.13}) is solvable with respect to $%
U\left( s\right) $ if and only if the scalar function $\left( c,\left(
sI-A_{2}\right) ^{-1}b_{2}\right) $ \ does not identically equal to zero for
sufficiently large $\func{Re}s\ .$ It surely holds true in regular case. $.$

\subsection{Dual controllability criterion}

Now consider the case $c\notin D(A_{2}^{\ast }).$

If $c\notin D(A_{2}^{\ast })$ then it is impossible to provide the existence
of continuous solution for equation (\ref{3.7}).

Obviously
\begin{equation}
\left( x_{2}^{\ast },S_{2}\left( t-\tau \right) x_{2}\right) =\left(
S_{2}^{\ast }\left( t-\tau \right) x_{2}^{\ast },x_{2}\right) ,~\forall
x_{2},x_{2}^{\ast }\in X_{2}.  \label{3.14}
\end{equation}%
Hence if $b_{2}\in D\left( A_{2}\right) ,$ then the function $\left(
c,S_{2}\left( t-\tau \right) b_{2}\right) $ is continuously differentiable
and
\begin{equation}
\frac{d}{dt}\left( c,S_{2}\left( t-\tau \right) b_{2}\right) =\left(
c,S_{2}\left( t-\tau \right) A_{2}b_{2}\right) ,~\tau \in \left[ 0,t\right] .
\label{3.15}
\end{equation}%
Using (\ref{3.14}) in (\ref{3.7}) we obtain that%
\begin{equation}
w\left( t\right) =\int_{0}^{t}\left( c,S_{2}\left( t-\tau \right)
b_{2}\right) u\left( \tau \right) d\tau  \label{3.16}
\end{equation}%
If $b\in D\left( A_{2}\right) ,$ then $\frac{\partial }{\partial \tau }%
K\left( t-\tau \right) =-\left( c,S_{2}\left( t-\tau \right)
A_{2}b_{2}\right) $ is continuous \cite{Hille&Philips, Krein}, hence if
absolutely continuous function $v\left( t\right) $ appears to be
continuously differentiable, there exists a continuous solution of equation (%
\ref{3.16})

If absolutely continuous function $v\left( t\right) $ is not continuously
differentiable, we consider the integral Volterra equation of the second kind 
\begin{equation}
\frac{w\left( t\right) }{\left( c,b_{2}\right) }=U\left( t\right)
+\int_{0}^{t}\frac{\left( c,S_{2}\left( t-\tau \right) A_{2}b_{2}\right) }{%
\left( c,b_{2}\right) }U\left( \tau \right) d\tau .  \label{3.17}
\end{equation}

Arguing as well as in the proof of the Theorem \ref{T3.2} we obtain the
validity of the following theorem:

\begin{theorem}
\label{T3.5}If

\begin{itemize}
\item the family of exponents%
\begin{equation}
\left\{ 1,\frac{b_{1j}}{\lambda _{j}}e^{-\lambda
_{1j}t},\,\,j=1,2,...\right\}  \label{3.18}
\end{equation}%
is strongly minimal,

\item $b_{2}\in D\left( A_{2}\right) \ $and$~\left( c,b_{2}\right) \neq 0,$
\end{itemize}

then interconnected equation (\ref{1.1})--(\ref{1.3}) is exact
null-controllable on $[0,t_{1}],$ where the control $u\left( t\right) $
satisfying (\ref{3.2}) is a square integrable first derivative of the
absolutely continuous solution $U\left( t\right) $ of the integral Volterra
equation of the second kind%
\begin{equation}
w\left( t\right) =\left( c,b_{2}\right) U\left( t\right) +\int_{0}^{t}\left(
c,S_{2}\left( t-\tau \right) A_{2}b_{2}\right) U\left( \tau \right) d\tau ,
\label{3.19}
\end{equation}
\end{theorem}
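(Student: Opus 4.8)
The plan is to mirror the proof of Theorem~\ref{T3.2}, replacing the smoothing hypothesis $c\in D(A_{2}^{\ast})$ by its dual $b_{2}\in D(A_{2})$ and using the adjoint identity (\ref{3.14}) to move the semigroup off the functional $c$ and onto the vector $b_{2}$. Since the family (\ref{3.18}) is strongly minimal, Theorem~\ref{T3.1} applies: given $x_{1}^{0}\in X_{1}$ and the prescribed $x_{20}\in X_{2}$, put $\alpha:=(c,x_{20})$; then there exists a smooth control $v(\cdot)\in H_{\alpha 0}^{1}[0,t_{1}]$, i.e. absolutely continuous with $v(0)=\alpha$, $v(t_{1})=0$ and $\dot v\in L_{2}[0,t_{1}]$, for which the mild solution of (\ref{1.1}) satisfies $x_{1}(t_{1},x_{1}^{0},v(\cdot))=0$. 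It then suffices to realize this $v$ as the output $v(t)=(c,x_{2}(t))$ of (\ref{1.3}), where $x_{2}$ is the mild solution (\ref{3.6}) driven by some $u(\cdot)\in L_{2}[0,t_{1}]$; as in the proof of Theorem~\ref{T3.2} this reduces, after passing to $w(t)=v(t)-v(0)$, to solving the Volterra equation of the first kind (\ref{3.16}) with continuous kernel $K(t-\tau)=(c,S_{2}(t-\tau)b_{2})$ for $u(\cdot)\in L_{2}[0,t_{1}]$.

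Next I would exploit $b_{2}\in D(A_{2})$. By (\ref{3.14})--(\ref{3.15}) the kernel $K(t-\tau)$ is continuously differentiable in $t$, with $\frac{d}{dt}K(t-\tau)=(c,S_{2}(t-\tau)A_{2}b_{2})$ continuous (strong continuity of $S_{2}$ applied to $A_{2}b_{2}\in X_{2}$), and $K(0)=(c,b_{2})\neq 0$ by hypothesis. Writing $U(t)=\int_{0}^{t}u(\tau)\,d\tau$, so that $U(0)=0$ and $\dot U=u$ a.e., and integrating the first-kind equation by parts converts it into the Volterra equation of the second kind (\ref{3.19}),
\[
w(t)=(c,b_{2})\,U(t)+\int_{0}^{t}\bigl(c,S_{2}(t-\tau)A_{2}b_{2}\bigr)\,U(\tau)\,d\tau .
\]

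Because $(c,b_{2})\neq 0$ and this kernel is continuous, classical Volterra theory \cite{Tricomi} provides a unique continuous solution $U(t)$ of (\ref{3.19}) for every continuous right-hand side, represented through the uniformly convergent resolvent series as in (\ref{3.8.1})--(\ref{3.8.3}). Since $w=v-v(0)$ is absolutely continuous with $\dot w=\dot v\in L_{2}[0,t_{1}]$, that representation shows $U$ is itself absolutely continuous with $\dot U\in L_{2}[0,t_{1}]$; one then sets $u:=\dot U$. Integrating (\ref{3.19}) by parts in the reverse direction and using $U(0)=0$ recovers the first-kind equation (\ref{3.16}) for this $u$, hence $v(t)=(c,x_{2}(t))$ with $x_{2}$ the mild solution of (\ref{1.3}) generated by $u$ from $x_{20}$. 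As this $v$ drives (\ref{1.1}) to $0$ at $t_{1}$, the interconnected system (\ref{1.1})--(\ref{1.3}) is exact null-controllable on $[0,t_{1}]$, with $u=\dot U$ the square-integrable derivative of the absolutely continuous solution $U$ of (\ref{3.19}).

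The step I expect to be the main obstacle is the regularity bootstrap: verifying that the continuous solution $U$ of the second-kind equation inherits absolute continuity with an $L_{2}$ derivative from $w$. This is precisely where the two hypotheses enter --- $b_{2}\in D(A_{2})$ makes the kernel of (\ref{3.19}) continuous, so the resolvent (Neumann) series converges uniformly and may be differentiated term by term, while $(c,b_{2})\neq 0$ keeps (\ref{3.19}) genuinely of the second kind rather than of the first kind, for which no such $L_{2}$-solvability can be claimed by these arguments. If instead $(c,b_{2})=0$, one iterates the reduction once more, as in the singular case of Theorem~\ref{T3.3}, at the cost of recovering $u$ only as a distribution.
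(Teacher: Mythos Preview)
Your proposal is correct and follows essentially the same route as the paper's argument: the paper establishes Theorem~\ref{T3.5} by declaring ``Arguing as well as in the proof of Theorem~\ref{T3.2}'' after first using the adjoint identity (\ref{3.14})--(\ref{3.15}) to shift the differentiation of the kernel from $c$ to $b_{2}\in D(A_{2})$, and then passing to the second-kind Volterra equation (\ref{3.17})/(\ref{3.19}) exactly as you do. Your write-up in fact supplies more detail than the paper (which relegates most of the work to the earlier proof), and your identification of the regularity bootstrap $w\in H_{0}^{1}\Rightarrow U\in AC$ with $\dot U\in L_{2}$ as the crux is on point.
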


The generalization for the case $\left( c,b_{2}\right) =0$ is done as above
(see the previous subsection).

\subsection{Strong minimality of real exponential families.}

A direct proof of this fact for a given sequence of exponents can sometimes
be tough. Below we prove two lemmas which substantially facilitate the
establishment of the strong minimality for real exponential families.

\begin{lemma}
\label{L3.2} If $\mu _{n}>0$ , the series
\begin{equation}
\sum_{n=1}^{\infty }\frac{1}{\mu _{n}}\   \label{3.21}
\end{equation}%
converges and the Dirichle series
\begin{equation}
\sum_{n=1}^{\infty }\frac{e^{-\mu _{n}\alpha }}{\beta _{n}}\   \label{3.22}
\end{equation}%
converges for some $\alpha >0,$ then the sequence
\begin{equation}
\left\{ \beta _{n}e^{\mu _{n}t},n=1,2,...,t\in \left[ 0,t_{1}\right]
,\forall t_{1}>0\right\}  \label{3.23}
\end{equation}%
is strongly minimal.
\end{lemma}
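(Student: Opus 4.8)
The plan is to exhibit, for the family $\{\beta_n e^{\mu_n t}\}$ on $[0,t_1]$, an explicit biorthogonal-type lower bound of the form required in Definition \ref{D2.2}, namely $\gamma\sum_{k=1}^n|c_k|^2\le\|\sum_{k=1}^n c_k\beta_n e^{\mu_n t}\|^2_{L_2[0,t_1]}$ with $\gamma>0$ independent of $n$. First I would record that, since $\mu_n>0$ are distinct and $\sum 1/\mu_n<\infty$, the functions $e^{\mu_n t}$ are linearly independent and one may compute the Gram matrix $G_{jk}=\int_0^{t_1}e^{\mu_j t}e^{\mu_k t}\,dt=(e^{(\mu_j+\mu_k)t_1}-1)/(\mu_j+\mu_k)$. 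Strong minimality of $\{\beta_n e^{\mu_n t}\}$ is equivalent to the matrix $D\,G\,D$ having a uniformly positive bottom of the spectrum, where $D=\mathrm{diag}(\beta_n)$; equivalently, the operator $\sum c_k\beta_k e^{\mu_k\cdot}$ is bounded below on $\ell_2$.

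The key step is to pass from the interval $[0,t_1]$ to the half-line $(-\infty,0]$ after a change of variable, so that the convergence hypotheses on $\sum 1/\mu_n$ and on the Dirichlet series $\sum e^{-\mu_n\alpha}/\beta_n$ can be used. Concretely I would substitute $t\mapsto -t$ and compare $\|\sum c_k\beta_k e^{\mu_k t}\|_{L_2[0,t_1]}$ with $\|\sum c_k\beta_k e^{-\mu_k s}\|_{L_2[0,\infty)}$; on the half-line the Gram matrix of $\{e^{-\mu_k s}\}$ is the Cauchy matrix $(\mu_j+\mu_k)^{-1}$, whose inverse and whose associated reproducing-kernel structure are classical. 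The plan is to use the standard fact that for a sequence of positive reals with $\sum 1/\mu_n<\infty$ the Cauchy/Hilbert-type kernel gives rise to a complete interpolating sequence in a Hardy/Paley–Wiener-type space, so that $\{e^{-\mu_n s}\}$ is a Riesz basis in its closed span in $L_2[0,\infty)$ with explicit frame bounds; the weights $\beta_n$ are then absorbed by the Dirichlet-series hypothesis, which guarantees that the normalized system $\beta_n e^{-\mu_n\alpha}e^{-\mu_n(s-\alpha)}$ has $\ell_2$-summable "defects" and hence remains strongly minimal (a Bari-type perturbation). Shifting back from $[\alpha,\infty)$ to $[0,t_1]$ costs only a fixed exponential factor $e^{-\mu_n\alpha}$ on each coordinate, which is exactly what the Dirichlet series $\sum e^{-\mu_n\alpha}/\beta_n$ controls, so no uniformity is lost.

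I would organize the argument as: (i) reduce, by $t\mapsto t_1-t$ or $t\mapsto-t$, to estimating $\|\sum c_k d_k e^{-\mu_k s}\|_{L_2[0,\infty)}$ with $d_k=\beta_k e^{\mu_k t_1}$ (or $d_k=\beta_k$); (ii) invoke the Cauchy-matrix / Muckenhoupt-type estimate that under $\sum 1/\mu_n<\infty$ the system $\{e^{-\mu_n s}\}$ is strongly minimal on $L_2[0,\infty)$ with a quantitative constant; (iii) use the Dirichlet-series convergence at $\alpha$ to show the weighted system stays strongly minimal, via summability of $\sum|d_k|^2 e^{-2\mu_k\alpha}$ against the biorthogonal functionals; (iv) transfer the bound back to $[0,t_1]$, noting $[0,t_1]\supset$ no issue since restriction only decreases the norm in the wrong direction — so here I would instead extend: majorize the $[0,\infty)$ norm by the $[0,t_1]$ norm using that $e^{-\mu_k s}$ decays, i.e. $\int_{t_1}^\infty$ contributes a small, uniformly controlled tail. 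The main obstacle I anticipate is precisely step (iv), controlling the tail $\int_{t_1}^\infty$: one must show $\|\sum c_k d_k e^{-\mu_k s}\|^2_{L_2[t_1,\infty)}\le\tfrac12\|\cdot\|^2_{L_2[0,\infty)}$ (or any fixed fraction) uniformly in $n$ and in $(c_k)$, and this is where the convergence of $\sum 1/\mu_n$ together with the Dirichlet series at $\alpha$ must be combined carefully — essentially a Hankel/Carleson-measure estimate showing the off-interval mass is a genuinely smaller proportion of the total. Once that uniform tail bound is in hand, the lower bound on $[0,t_1]$ follows by subtraction, giving the desired $\gamma>0$.
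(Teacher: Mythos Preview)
Your plan has a genuine gap at step (ii). The claim that, under $\sum 1/\mu_n<\infty$, the system $\{e^{-\mu_n s}\}_{n\ge1}$ is strongly minimal in $L_2[0,\infty)$ is false as stated: the norms satisfy $\|e^{-\mu_n\cdot}\|_{L_2[0,\infty)}^2=1/(2\mu_n)\to0$, so taking $c=\delta_{n}$ in Definition~\ref{D2.2} already rules out any uniform lower bound $\gamma>0$. No Cauchy-matrix or Muckenhoupt argument can repair this without reinserting the weights $d_k=\beta_k e^{\mu_k t_1}$ from the start, and then you are no longer proving the modular statement you wrote in (ii). Even for the normalized family $\{\sqrt{2\mu_n}\,e^{-\mu_n s}\}$, the hypothesis $\sum 1/\mu_n<\infty$ alone does not force a uniform lower Gram bound on the half-line (it allows clustering of the $\mu_n$), so the ``standard fact'' you invoke is not available at this level of generality. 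Step (iv) is also not routine: the tail estimate $\|\sum c_k d_k e^{-\mu_k s}\|_{L_2[t_1,\infty)}^2\le\tfrac12\|\cdot\|_{L_2[0,\infty)}^2$ uniformly in the coefficients is essentially as hard as the lemma itself.

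The paper's argument avoids the half-line entirely and is worth knowing. It works on a \emph{finite} subinterval $[0,t_2]$ with $t_1=2t_2$ and proceeds by duality: Fattorini--Russell (their Theorem~1.5) gives, for every $\varepsilon>0$, a biorthogonal family $\{w_n\}$ to $\{e^{-\mu_n t}\}$ on $[0,t_2]$ with $\|w_n\|\le K_\varepsilon e^{\varepsilon\mu_n}$. A reflection and rescaling produces a biorthogonal family $\{u_n\}$ to $\{\beta_n e^{\mu_n t}\}$ with $\|u_n\|\le K_\varepsilon\,\beta_n^{-1}e^{-(t_2-\varepsilon)\mu_n}$. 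Choosing $\varepsilon$ so that $t_2-\varepsilon>\alpha$, the Dirichlet-series hypothesis gives $\sum_n\|u_n\|^2<\infty$; hence the biorthogonal Gram form is uniformly bounded above, and by the Kaczmarz--Steinhaus duality this is equivalent to the original family being strongly minimal. The passage from $[0,t_2]$ to $[0,t_1]$ is then a simple shift-and-extend of the biorthogonal family. In short: the paper bounds the \emph{biorthogonal} side from above (which is where the quantitative Fattorini--Russell input and the Dirichlet series are tailor-made) rather than attempting a direct lower bound on the exponentials.
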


\begin{proof}
Let $t_{1}=2t_{2}.$ Using results of \cite{Fattorini&Russel} one can show
that if the series $\sum_{n=1}^{\infty }\frac{1}{\mu _{n}}$ converges and $\
\lambda _{n+1}-\lambda _{n}\geq 1$, then the sequence
\begin{equation}
\left\{ e^{-\mu _{n}t},n=1,2,...,t\in \left[ 0,t_{2}\right] ,\forall
t_{2}>0\right\}  \label{3.24}
\end{equation}%
is minimal. Clearly the sequence
\begin{equation*}
\left\{ \beta _{n}e^{\mu _{n}t},n=1,2,...,t\in \left[ 0,t_{2}\right]
\right\} ,\forall t_{2}>0
\end{equation*}%
is also minimal. In virtue of Theorem 1.5 of \cite{Fattorini&Russel} for
each $\varepsilon >0$ there exists a positive constant $K_{\varepsilon }$
such that the sequence $\left\{ w_{n}\left( t\right) ,n=1,2,...,t\in \left[
0,t_{2}\right] \right\} $ biorthogonal to the sequence (\ref{3.23})
satisfies the condition%
\begin{equation*}
\left\Vert w_{n}\left( \cdot \right) \right\Vert <K_{\varepsilon
}e^{\varepsilon \mu _{n}},n=1,2,...,.
\end{equation*}%
Hence the sequence $\left\{ u_{n}\left( t\right) =\frac{1}{\beta _{n}}%
w_{n}\left( t_{2}-t\right) e^{-\mu _{n}t_{2}},n=1,2,...,t\in \left[ 0,t_{2}%
\right] \right\} $ is biorthogonal to the sequence (\ref{3.17}) and it
satisfies the condition%
\begin{equation}
\left\Vert u_{n}\left( \cdot \right) \right\Vert <\frac{1}{\beta _{n}}%
K_{\varepsilon }e^{\varepsilon \mu _{n}}e^{-\mu _{n}t_{2}}<\frac{1}{\beta
_{n}}K_{\varepsilon }e^{\varepsilon \mu _{n}},n=1,2,...,.  \label{3.25}
\end{equation}%
The positive constant $\varepsilon $ can be chosen such that $%
t_{2}-\varepsilon >\alpha .$

By the Minkowsky inequality and (\ref{3.25}) one can show that

$\sum\limits_{n=1}^{p}\sum\limits_{m=1}^{p}c_{n}e^{-\mu _{n}t_{2}}\left(
\int_{0}^{t_{2}}u_{n}\left( t\right) u_{m}\left( t\right) dt\right) e^{-\mu
_{m}t_{2}}c_{m}=$

$=\int_{0}^{t_{2}}\left( \sum\limits_{n=1}^{p}c_{n}e^{-\mu
_{n}t_{2}}u_{n}\left( t\right) dt\right) ^{2}dt\leq
\int_{0}^{t_{2}}\sum\limits_{n=1}^{p}\left\vert c_{n}\right\vert
^{2}\sum\limits_{n=1}^{p}\left\vert e^{-\mu _{n}t_{2}}u_{n}\left( t\right)
\right\vert ^{2}dt=$

$=\sum\limits_{n=1}^{p}\left\vert c_{n}\right\vert
^{2}\sum\limits_{n=1}^{p}\int_{0}^{t_{2}}\left\vert e^{-\mu
_{n}t_{2}}u_{n}\left( t\right) \right\vert ^{2}dt=$

$=\sum\limits_{n=1}^{p}\left\vert c_{n}\right\vert
^{2}\sum\limits_{n=1}^{p}e^{-2\mu _{n}t_{2}}\int_{0}^{t_{2}}\left\vert
u_{n}\left( t\right) \right\vert ^{2}dt\leq $

$\leq \sum\limits_{n=1}^{p}\left\vert c_{n}\right\vert
^{2}\sum\limits_{n=1}^{p}e^{-2\mu _{n}t_{2}}\left\Vert u_{n}\left( \cdot
\right) \right\Vert ^{2}\leq \sum\limits_{n=1}^{p}\left\vert
c_{n}\right\vert ^{2}K_{\varepsilon }^{2}\sum\limits_{n=1}^{p}\frac{1}{\beta
_{n}^{2}}e^{-2\mu _{n}\left( t_{2}-\varepsilon \right) }.$

It is well-known from the Dirichle series theory \cite{Leontiev}, that if
the Dirichle series (\ref{3.22}) converges, then the Dirichle series $%
\sum_{n=1}^{\infty }\frac{e^{-\mu _{n}t}}{\beta _{n}}$ converges for any $%
t\geq \alpha .$ Therefore according to theorem condition the series $%
\sum\limits_{n=1}^{\infty }\frac{1}{\beta _{n}}e^{-2\mu _{n}\left(
t_{2}-\varepsilon \right) }$ converges\footnote{%
The number $\alpha _{0}=\inf \left\{ \alpha \in \mathbb{R}:~\text{the series
(\ref{3.22}) converges}\right\} $ is said to be the abscissa of the
convergence of Dirichle series \cite{Leontiev}.} for any $t_{2},\varepsilon
,t_{2}>\varepsilon +\alpha ,$ so the same holds true for the series $%
\sum\limits_{n=1}^{\infty }\frac{1}{\beta _{n}^{2}}e^{-2\mu _{n}\left(
t_{2}-\varepsilon \right) }$ .~Hence $\sum\limits_{n=1}^{p}\frac{1}{\beta
_{n}^{2}}e^{-2\mu _{n}\left( t_{2}-\varepsilon \right) }\leq M$, \noindent
where $M$ is a positive constant, so
\begin{equation}
\sum\limits_{n=1}^{p}\sum\limits_{m=1}^{p}c_{n}\ e^{-\mu _{n}t_{2}}\left(
\int_{0}^{t_{2}}u_{n}\left( t\right) u_{m}\left( t\right) dt\right) e^{-\mu
_{m}t_{2}}c_{m}\leq K_{\varepsilon }^{2}M\sum\limits_{n=1}^{p}\left\vert
c_{n}\right\vert ^{2}  \label{3.26}
\end{equation}%
for every finite sequence $\left\{ c_{1},c_{2},...,c_{p}\right\} .$

The sequence
\begin{equation*}
\left\{ h_{n}\left( t\right) ,n=1,2,...,\right\} ,t\in \left[ 0,t_{1}\right]
,
\end{equation*}%
where\footnote{%
Recall, that $t_{1}=2t_{2}$.}
\begin{equation}
h_{n}\left( t\right) =\left\{
\begin{array}{cc}
e^{-\mu _{n}t_{2}}u_{n}\left( t-t_{2}\right) , & t\in \left[ t_{2},2t_{2}%
\right] , \\
0, & t\in \left[ 0,t_{2}\right) ,%
\end{array}%
\right. n=1,2,...,  \label{3.27}
\end{equation}%
is the biorthogonal to the sequence
\begin{equation*}
\left\{ \beta _{n}e^{\mu _{n}t},n=1,2,...,t\in \left[ 0,t_{1}\right]
\right\} .
\end{equation*}%
Indeed,

$\int_{0}^{t_{1}}\beta _{n}e^{\mu _{n}t}h_{m}\left( t\right)
dt=\int_{t_{2}}^{2t_{2}}\beta _{n}e^{\mu _{n}t}e^{-\mu _{m}t_{2}}u_{m}\left(
t-t_{2}\right) dt=$

$=e^{\left( \mu _{n}-\mu _{m}\right) t_{2}}\int_{0}^{t_{2}}\beta _{n}e^{\mu
_{n}t}u_{m}\left( \tau \right) d\tau =\delta _{nm},n,m=1,2,...,$ where $%
\delta _{nm},n,m=1,2,...,$ is the Kroneker Delta.

Further we have

\begin{eqnarray}
\int_{0}^{t_{1}}h_{n}\left( t\right) h_{m}\left( t\right) dt &=&e^{-\mu
_{n}t_{2}}\left( \int_{t_{2}}^{2t_{2}}u_{n}\left( t-t_{2}\right) u_{m}\left(
t-t_{2}\right) dt\right) e^{-\mu _{m}t_{2}}=  \notag \\
&=&e^{-\mu _{n}t_{2}}\left( \int_{0}^{t_{2}}u_{n}\left( t\right) u_{m}\left(
t\right) dt\right) e^{-\mu _{m}t_{2}},  \label{3.28}
\end{eqnarray}

\noindent and so it follows from (\ref{3.26})--(\ref{3.28}), that
\begin{equation*}
\sum_{n=1}^{p}\sum_{m=1}^{p}c_{n}\left( \int_{0}^{t_{1}}h_{n}\left( t\right)
h_{m}\left( t\right) dt\right) c_{m}\leq K_{\varepsilon
}^{2}M\sum_{n=1}^{p}\left\vert c_{n}\right\vert ^{2}.
\end{equation*}%
Hence \cite{Kaczmarz&Steinhaus}
\begin{equation*}
\sum_{n=1}^{p}\sum_{m=1}^{p}c_{n}\left( \int_{0}^{t_{1}}\beta _{n}e^{\mu
_{n}t}\beta _{m}e^{\mu _{m}t}\right) c_{m}d\tau \geq \gamma
\sum_{n=1}^{p}\left\vert c_{n}\right\vert ^{2},p=1,2,...,
\end{equation*}%
for every finite sequence $\left\{ c_{1},c_{2},...,c_{p}\right\} ,$ where $%
\gamma =\frac{1}{K_{\varepsilon }^{2}M}>0.$ It proves that the sequence
\begin{equation*}
\left\{ \beta _{n}e^{\mu _{n}t},t\in \left[ 0,t_{1}\right]
,~n=1,2,...\right\}
\end{equation*}%
is strongly minimal for any $t_{1}>0$.~QED
\end{proof}

\begin{lemma}
\label{L3.3} If conditions of Lemma \ref{L3.1} hold, then the sequence
\begin{equation}
\left\{ 1,\beta _{n}e^{\mu _{n}t},n=1,2,...,t\in \left[ 0,t_{1}\right]
,\forall t_{1}>0\right\}  \label{3.29}
\end{equation}%
is also strongly minimal.
\end{lemma}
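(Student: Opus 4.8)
The plan is to obtain this lemma as a corollary of Lemma~\ref{L3.2}. A literal application is blocked only by the fact that the constant function corresponds to the exponent $\mu_0=0$, which violates $\mu_n>0$ and makes $\sum 1/\mu_n$ diverge. To get around this, fix any $\delta>0$ and write, on $[0,t_1]$,
\[
1=e^{-\delta t}\cdot e^{\delta t},\qquad \beta_n e^{\mu_n t}=e^{-\delta t}\cdot \beta_n e^{(\mu_n+\delta)t}.
\]
Thus the family (\ref{3.29}) is $e^{-\delta t}$ times the family $\{e^{\delta t}\}\cup\{\beta_n e^{(\mu_n+\delta)t},\ n=1,2,\dots\}$, which, relabelled with $\delta$ placed first, has exactly the form $\{\eta_k e^{\nu_k t},\ k=1,2,\dots\}$ treated in Lemma~\ref{L3.2}, where $\nu_1=\delta$, $\eta_1=1$, and $\nu_k=\mu_{k-1}+\delta$, $\eta_k=\beta_{k-1}$ for $k\ge 2$ (the $\nu_k$ are automatically in increasing order for any $\delta>0$).

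Next I would verify that $\{\nu_k\}$, $\{\eta_k\}$ inherit the hypotheses of Lemma~\ref{L3.2}. All $\nu_k>0$. Since $1/(\mu_n+\delta)\le 1/\mu_n$ and the series (\ref{3.21}) converges, $\sum_k 1/\nu_k=1/\delta+\sum_{n\ge 1}1/(\mu_n+\delta)<\infty$. And for the same $\alpha>0$ for which (\ref{3.22}) converges, $\sum_k e^{-\nu_k\alpha}/\eta_k=e^{-\delta\alpha}(1+\sum_{n\ge 1}e^{-\mu_n\alpha}/\beta_n)<\infty$. Hence Lemma~\ref{L3.2} yields a constant $\gamma>0$ with $\|\sum_k d_k\eta_k e^{\nu_k t}\|_{L_2[0,t_1]}^2\ge\gamma\sum_k|d_k|^2$ for every finite scalar sequence $(d_k)$.

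To conclude, I would transfer this back to (\ref{3.29}), using that multiplication by the factor $e^{-\delta t}$, which satisfies $e^{-2\delta t}\ge e^{-2\delta t_1}>0$ on $[0,t_1]$, only changes the strong minimality constant:
\[
\Bigl\|c_0+\sum_{n=1}^{N}c_n\beta_n e^{\mu_n t}\Bigr\|^2=\int_0^{t_1}e^{-2\delta t}\Bigl|c_0 e^{\delta t}+\sum_{n=1}^{N}c_n\beta_n e^{(\mu_n+\delta)t}\Bigr|^2dt\ \ge\ e^{-2\delta t_1}\gamma\Bigl(|c_0|^2+\sum_{n=1}^{N}|c_n|^2\Bigr),
\]
which is precisely the strong minimality of (\ref{3.29}) with constant $e^{-2\delta t_1}\gamma$. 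I do not expect a genuine obstacle here; the only delicate point is that the proof of Lemma~\ref{L3.2} internally invokes a spectral-gap condition ($\mu_{k+1}-\mu_k\ge 1$) for the exponents, which for $\{\nu_k\}$ may fail at finitely many places near $\nu_1=\delta$. Since $\mu_n\to\infty$ (forced by convergence of (\ref{3.21})), only finitely many such gaps can be short, and removing finitely many exponents does not affect (strong) minimality, so the reduction is harmless. Alternatively, and closer to the style of Lemma~\ref{L3.2}, one can simply rerun its proof with the index starting at $0$, $\mu_0=0$, $\beta_0=1$: the family $\{e^{-\mu_n t},\ n\ge 0\}$ on $[0,t_2]$ remains minimal by \cite{Fattorini&Russel} (a zero exponent being admissible once $\sum_{n\ge 1}1/\mu_n<\infty$), its biorthogonal functions still satisfy $\|w_n\|<K_\varepsilon e^{\varepsilon\mu_n}$ with $\|w_0\|<K_\varepsilon$, and the only change to the estimates is the harmless extra term $\beta_0^{-2}e^{-2\mu_0(t_2-\varepsilon)}=1$ in $\sum_n\beta_n^{-2}e^{-2\mu_n(t_2-\varepsilon)}$.
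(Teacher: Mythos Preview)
Your approach is essentially the same as the paper's: both shift all exponents by a positive constant (you use an arbitrary $\delta>0$, the paper uses the $\alpha$ from hypothesis~(\ref{3.22})), apply Lemma~\ref{L3.2} to the shifted family, and then undo the shift by bounding $e^{-2\delta t}\ge e^{-2\delta t_1}$ on $[0,t_1]$. The paper's proof is terser and does not pause over the gap condition, but the underlying reduction is identical.
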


\begin{proof}
One can write family (\ref{3.29}) by%
\begin{equation}
g_{n}e^{v_{n}t},n=0,1,2,...,  \label{3.30}
\end{equation}%
where
\begin{eqnarray*}
v_{n} &=&\left\{
\begin{array}{cc}
\alpha & n=0 \\
\mu _{n}+\alpha & n=1,2,...%
\end{array}%
\right. , \\
g_{n} &=&\left\{
\begin{array}{cc}
1, & n=0 \\
\beta _{n} & n=1,2,...%
\end{array}%
\right. .
\end{eqnarray*}%
\newline
One can see that the family (\ref{3.30}) is the family of the form (\ref%
{3.23}), and from the convergence of the series $\sum\limits_{n=1}^{\infty }%
\frac{1}{\beta _{n}}e^{-\mu _{n}\left( t_{2}-\varepsilon \right) }$ it
follows that the series $\sum\limits_{n=1}^{p}\frac{1}{g_{n}^{2}}e^{-2\left(
\mu _{n}+\alpha \right) \left( t_{1}-\varepsilon \right) }$ also converges
for any $t_{2},\varepsilon ,t_{1}>\varepsilon .$ Hence in accordance with
Lemma \ref{L3.2} the sequence
\begin{equation}
\left\{ g_{n}e^{v_{n}t}=\left\{
\begin{array}{cc}
e^{\alpha t}, & n=0, \\
g_{n}e^{\left( \mu _{n}+\alpha \right) t}, & n=1,2,...,%
\end{array}%
\right. ,t\in \left[ 0,t_{2}\right] \right\}  \label{3.31}
\end{equation}%
is strongly minimal for any $t_{1}>0$, i.e. there exists a constant ~$\gamma
>0$ such that

\begin{equation}
\left\Vert \sum_{n=0}^{p}c_{n}g_{n}e^{v_{n}t}\right\Vert ^{2}\geq \gamma
\left\Vert \sum_{n=0}^{p}c_{n}\right\Vert ^{2}.  \label{3.32}
\end{equation}

As much as $\alpha >0,$ we obtain by (\ref{3.31})--(\ref{3.32}), that

$\left\Vert \left( c_{0}\ +\sum_{n=1}^{p}c_{n}g_{n}e^{\mu _{n}t}\right)
\right\Vert ^{2}=\left\Vert e^{-\alpha
t}\sum_{n=0}^{p}c_{n}g_{n}e^{v_{n}t}\right\Vert ^{2}\geq $

$\geq e^{-2\alpha t_{1}}\left\Vert \left( c_{0}\
+\sum_{n=1}^{p}c_{n}g_{n}e^{\mu _{n}t}\right) \right\Vert ^{2}\geq \gamma
_{\alpha }\left\Vert \sum_{n=0}^{p}c_{n}\right\Vert ^{2}, \forall p=1,2,...,$

\noindent where $\gamma _{\alpha }=\gamma e^{-2\alpha t_{1}}>0.$

It proves the Lemma.
\end{proof}

\section{Examples. Exact null controllability of interconnected Heat
Equation and Wave Equation by force motion}

We consider the heat equation with force motion
\begin{align}
y_{t}^{\prime }& =y_{xx}^{\prime \prime }+b_{1}\left( x\right) v\left(
t\right) ,0\leq t\leq t_{1},~0\leq x\leq \pi ,  \label{4.1} \\
y\left( 0,t\right) & =y\left( \pi ,t\right) =0,0\leq t\leq t_{1},
\label{4.2} \\
\ y\left( x,0\right) & =\varphi _{0}\left( x\right) ,~0\leq x\leq \pi ,
\label{4.3}
\end{align}%
governed by a control $u\left( \cdot \right) $ of the wave equation%
\begin{align}
z_{tt}^{\prime \prime }-z_{xx}^{\prime \prime }& =b_{2}\left( x\right)
u\left( t\right) ,~0\leq t\leq t_{1},0\leq x\leq \pi ,  \label{4.5} \\
z\left( 0,t\right) & =z\left( \pi ,t\right) =0,0\leq t\leq t_{1},
\label{4.6} \\
z\left( x,0\right) & =\psi _{0}\left( x\right) ,~z_{t}^{\prime }\left(
x,0\right) =\psi _{1}\left( x\right) ,~0\leq x\leq \pi ,  \label{4.7}
\end{align}

Here $\varphi _{0},~\psi ^{j},j=1,2,~$and$~~b_{1}\left( x\right)
,b_{2}\left( x\right) \ $belong to $L_{2}\left[ 0,\pi \right] .$

Let $H^{2}\left[ 0,\pi \right] ,~H_{0}^{1}\left[ 0,\pi \right] $ be Sobolev
spaces (see \cite{Hutson&Pum} for definitions of the spaces $H^{m}\left[ a,b%
\right] ,~H_{0}^{m}\left[ a,b\right] ,a,b\in \mathbb{R}.$)

Heat equation (\ref{4.1})--(\ref{4.3}) can be written in the semigroup
framework (\ref{1.3}), where $X_{1}=L_{2}\left[ 0,\pi \right] ,$ the
operator $A_{1}$ is defined by the differential operator $A_{1}=\frac{d^{2}}{%
dx^{2}}\ $ with the domain \footnote{$D\left( A_{1}\right) $ can also be
defined by \cite{Balakrishnan}
\begin{equation*}
D\left( A_{1}\right) =\left\{
\begin{array}{c}
y\left( \cdot \right) ,y^{\prime }\left( \cdot \right) \in AC\left[ 0,\pi %
\right] , \\
y^{\prime \prime }\left( \cdot \right) \in L_{2}\left[ 0,\pi \right]
,y\left( 0\right) =y\left( \pi \right) =0,%
\end{array}%
\right\}
\end{equation*}%
where $AC\left[ 0,\pi \right] $ is the set of absolutely continuous on $%
\left[ 0,\pi \right] $ functions.
\par
According to \cite{Hutson&Pum}, $H_{0}^{1}\left[ 0,\pi \right] =\left\{
y\left( \cdot \right) \in AC\left[ 0,\pi \right] ,y\left( 0\right) =y\left(
\pi \right) =0\right\} .$}
\begin{equation}
D\left( A_{1}\right) =H^{2}\left[ 0,\pi \right] \cap H_{0}^{1}\left[ 0,\pi %
\right] .  \label{4.8}
\end{equation}

Denote: $z_{1}\left( x,t\right) =z\left( x,t\right) ,z_{2}\left( x,t\right)
=z_{t}^{\prime }\left( x,t\right) ,$%
\begin{equation*}
X_{2}=H_{0}^{1}\left[ 0,\pi \right] \times L_{2}\left[ 0,\pi \right]
=\left\{ \left( z_{1},z_{2}\right) :z_{1}\in H_{0}^{1}\left[ 0,\pi \right]
,z_{2}\in L_{2}\left[ 0,\pi \right] \right\}
\end{equation*}%
with the scalar product of $\left(
\begin{array}{c}
z_{1} \\
z_{2}%
\end{array}%
\right) ,\left(
\begin{array}{c}
y_{1} \\
y_{2}%
\end{array}%
\right) \in X_{2}$ defined by

$\left( \left(
\begin{array}{c}
z_{1} \\
z_{2}%
\end{array}%
\right) ,\left(
\begin{array}{c}
y_{1} \\
y_{2}%
\end{array}%
\right) \right) =\int\limits_{0}^{\pi }\left( z_{1}^{\prime }\left( x\right)
y_{1}^{\prime }\left( x\right) +z_{2}\left( x\right) y_{2}\left( x\right)
\right) dx.$

Wave equation (\ref{4.5})--(\ref{4.7}) can be written in the semigroup
framework (\ref{1.3}), where $x_{2}\left( t\right) =\left(
\begin{array}{c}
z_{1}\left( x,t\right) \\
z_{2}\left( x,t\right)%
\end{array}%
\right) =\left(
\begin{array}{c}
z\left( x,t\right) \\
z_{t}^{\prime }\left( x,t\right)%
\end{array}%
\right) ,b_{2}=\left(
\begin{array}{c}
0 \\
b_{2}\left( x\right)%
\end{array}%
\right) .$

The operator $A_{2}$ is defined by the matrix differential operator $%
A_{2}=\left(
\begin{array}{cc}
0 & 1 \\
\frac{d^{2}}{dx^{2}} & 0%
\end{array}%
\right) $ with the domain

\begin{equation}
D\left( A_{2}\right) =\left( z_{1}\left( x\right) ,z_{2}\left( x\right)
\right) \in \left( H^{2}\left[ 0,\pi \right] \cap H_{0}^{1}\left[ 0,\pi %
\right] \right) \times H_{0}^{1}\left[ 0,\pi \right] ,  \label{4.10}
\end{equation}%
\begin{equation}
A_{2}z=A_{2}\left(
\begin{array}{c}
z_{1} \\
z_{2}%
\end{array}%
\right) =\left(
\begin{array}{c}
z_{2} \\
z_{1}^{\prime \prime }%
\end{array}%
\right) ,  \label{4.11}
\end{equation}

A smooth distributed control $v\left( t\right) ~$in the force motion term of
heat equation (\ref{4.1})--(\ref{4.3})can be considered as an observation

\begin{equation}
v\left( t\right) =\int_{0}^{\pi }\left( c_{1}^{\prime }\left( x\right)
z_{x}^{\prime }\left( x,t\right) +c_{2}\left( x\right) z_{t}^{\prime }\left(
x,t\right) \right) dx,~0\leq t\leq t_{1}\pi ,c_{2}\left( \cdot \right) \in
L_{2}\left[ 0,\pi \right]  \label{4.4}
\end{equation}%
of wave equation (\ref{4.5})--(\ref{4.7}), where $c\left( \cdot \right)
=\left( c_{1}\left( \cdot \right) ,c_{2}\left( \cdot \right) \right) \in
H_{0}^{1}\left[ 0,\pi \right] \times L_{2}\left[ 0,\pi \right] .$

Therefore the results obtained in the previous section can be applied.

\subsubsection{Controllability conditions}

The eigenvalues $\lambda _{n}\ $and corresponding eigenvectors $\varphi
_{n},n=1,2,...$of the operator $A_{1.}$ are obtained by $\lambda
_{n}=-n^{2},\varphi _{n}\ \sin nx,n=1,2,...,$ and as much as the operator $%
A_{1}$ is selfadjoint, the eigenvectors $\varphi _{n}$~and~the eigenvectors $%
\psi _{n},n=1,2,...,$ of the adjoint operator $A_{1}^{\ast }$ are the same.

Obviously, the sequence $\sin nx,n=1,2,...\ $of eigenvectors of the operator
$A_{1}~$(or $A_{1}^{\ast }$) \ forms a Riesz basic in $X_{1}.$

Denote $b_{1n}=\int_{0}^{\pi }b_{1}\left( x\right) \sin nx,n=1,2,...,.\ $

In accordance with Theorem \ref{T3.2} to establish the conditions of the
exact null controllability of interconnected system under consideration we
should prove that the family
\begin{equation}
\left\{ 1,\frac{b_{1n}}{n^{2}}e^{n^{2}t},n=1,2,\dots ,...,t\in \lbrack
0,t_{1}]\right\}  \label{4.12}
\end{equation}%
is strongly minimal.

In our case $\mu _{n}=\ \lambda _{n}=\ n^{2},n=1,2,...,$ the series $%
\sum_{n=1}^{\infty }\frac{1}{\mu _{n}}=\sum_{n=1}^{\infty }\frac{1}{n^{2}}$
converges.

Hence in accordance with Theorems \ref{T3.1}--\ref{T3.2} and Lemma \ref{L3.1}
we obtain the validity of the following theorem:

\begin{theorem}
\label{T4.1}If the series%
\begin{equation}
\sum_{n=1}^{\infty }\frac{n^{2}}{b_{1n}}e^{-n^{2}\alpha }  \label{4.13}
\end{equation}
converges for some $\alpha >0,$ then equation (\ref{4.1})--(\ref{4.3}) is
exact null-controllable on $\left[ 0,t_{1}\right] ,\forall t_{1}>0,$ by
smooth controls $v\left( \cdot \right) \in H_{\alpha 0}^{1}\left[ 0,t_{1}%
\right] $.
\end{theorem}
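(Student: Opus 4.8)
The plan is to combine the spectral description of the Dirichlet Laplacian set up just above with the smooth-control criterion of Theorem \ref{T3.1} and the strong-minimality test of Lemma \ref{L3.3}. First I would record the spectral data: the operator $A_{1}=d^{2}/dx^{2}$ on $[0,\pi]$ with the domain (\ref{4.8}) is selfadjoint with simple eigenvalues $\lambda_{n}=-n^{2}$, $n=1,2,\dots$, and eigenvectors $\sin nx$ which form a Riesz basis in $X_{1}=L_{2}[0,\pi]$; since $A_{1}$ is selfadjoint the eigenvectors $\psi_{n}$ of $A_{1}^{\ast}$ coincide with $\sin nx$, so $b_{1n}=(b_{1},\psi_{n})=\int_{0}^{\pi}b_{1}(x)\sin nx\,dx$. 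In particular $0\notin\sigma_{1}$, so in Theorem \ref{T3.1} only the first alternative can occur and there is nothing to verify about $b_{10}$. Thus, by Theorem \ref{T3.1}, the heat equation (\ref{4.1})--(\ref{4.3}), written in the abstract form (\ref{1.1}), is exact null-controllable on $[0,t_{1}]$ by smooth controls $v(\cdot)\in H_{\alpha 0}^{1}[0,t_{1}]$ if and only if the family $\{\,1,\ (b_{1n}/\lambda_{n})e^{-\lambda_{n}t},\ n=1,2,\dots\,\}$ is strongly minimal in $L_{2}[0,t_{1}]$. Because $\lambda_{n}=-n^{2}$ and strong minimality is unchanged when individual members of a family are multiplied by nonzero constants, this family is strongly minimal if and only if the family (\ref{4.12}) is.

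Next I would apply Lemma \ref{L3.3} with $\mu_{n}=n^{2}$ and $\beta_{n}=b_{1n}/n^{2}$. Its hypotheses are those of Lemma \ref{L3.2}, which I would check one by one: $\mu_{n}>0$ is clear; the gap condition $\mu_{n+1}-\mu_{n}=2n+1\geq 1$ used in the proof of Lemma \ref{L3.2} holds for all $n$; the series $\sum_{n=1}^{\infty}1/\mu_{n}=\sum_{n=1}^{\infty}1/n^{2}$ converges; and the Dirichlet series $\sum_{n=1}^{\infty}e^{-\mu_{n}\alpha}/\beta_{n}=\sum_{n=1}^{\infty}(n^{2}/b_{1n})e^{-n^{2}\alpha}$ converges for some $\alpha>0$, which is exactly hypothesis (\ref{4.13}) (note that (\ref{4.13}) also forces $b_{1n}\neq 0$ for every $n$, so $\beta_{n}$ is well defined). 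Lemma \ref{L3.3} then yields that the family $\{\,1,\beta_{n}e^{\mu_{n}t}\,\}$, i.e. (\ref{4.12}), is strongly minimal on $[0,t_{1}]$ for every $t_{1}>0$. Together with the equivalence established in the previous paragraph this gives the exact null-controllability of (\ref{4.1})--(\ref{4.3}) by smooth controls $v(\cdot)\in H_{\alpha 0}^{1}[0,t_{1}]$ for every $t_{1}>0$, as claimed.

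Since the whole argument is a verification that the abstract results apply, no serious obstacle is expected; the only points requiring care are bookkeeping ones — matching $\beta_{n}$ with $b_{1n}/n^{2}$ so that the Dirichlet-series condition of Lemma \ref{L3.2} becomes precisely (\ref{4.13}), and absorbing the sign and the factor $1/n^{2}$ appearing in $(b_{1n}/\lambda_{n})e^{-\lambda_{n}t}$ without disturbing strong minimality. The gap condition and the convergence of $\sum 1/n^{2}$ are automatic for the spectrum $\{-n^{2}\}$, so there is nothing delicate in the analytic part.
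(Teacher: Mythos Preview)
Your proposal is correct and follows exactly the route the paper takes: identify the spectral data $\lambda_n=-n^2$, $\psi_n=\sin nx$, invoke Theorem~\ref{T3.1} to reduce the question to strong minimality of the family (\ref{4.12}), and then verify that Lemma~\ref{L3.3} (via Lemma~\ref{L3.2}) applies with $\mu_n=n^2$ and $\beta_n=b_{1n}/n^2$, so that its Dirichlet-series hypothesis becomes precisely (\ref{4.13}). The only cosmetic point is that ``multiplied by nonzero constants'' is stronger than needed---here you are only absorbing the sign $-1$ coming from $\lambda_n=-n^2$, which certainly preserves strong minimality.
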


\subsection{Regular case}

%

\begin{theorem}
\label{T4.2}If

\begin{enumerate}
\item series (\ref{4.13}) converges for some $\alpha >0,$

\item $b_{2}\left( \cdot \right) ,c_{2}\left( \cdot \right) \in L_{2}\left[
0,\pi \right] ,$

\item $\int_{0}^{\pi }c_{2}\left( x\right) b_{2}\left( x\right) dx\neq 0,$
\end{enumerate}

then interconnected system equation (\ref{4.1})--(\ref{4.3}),(\ref{4.5})--(%
\ref{4.7}),interconnected by (\ref{4.4}) is exact null-controllable on $%
\left[ 0,t_{1}\right] ,\forall t_{1}>0.$
\end{theorem}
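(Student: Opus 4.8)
The plan is to recognise the interconnected heat/wave system as a concrete instance of the abstract pair (\ref{1.1})--(\ref{1.3}) and then to check, one by one, the two hypotheses of Theorem~\ref{T3.2}. \emph{Framework.} The heat equation (\ref{4.1})--(\ref{4.3}) is system (\ref{1.1}) with $X_1=L_2[0,\pi]$, $A_1=d^2/dx^2$ on $D(A_1)=H^2[0,\pi]\cap H_0^1[0,\pi]$, and $b_1=b_1(x)$; here $A_1$ is self-adjoint with the simple eigenvalues $\lambda_n=-n^2$ and eigenfunctions $\sin nx$, which (after normalisation) form an orthonormal, hence Riesz, basis of $X_1$, so all the standing assumptions on $A_1$ hold, and $0\notin\sigma_1$, so the condition $b_{10}\neq0$ is vacuous. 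The wave equation (\ref{4.5})--(\ref{4.7}) is system (\ref{1.3}) with $X_2=H_0^1[0,\pi]\times L_2[0,\pi]$, $A_2$ the generator (\ref{4.10})--(\ref{4.11}) of the unitary wave $C_0$-group $S_2(t)$, and $b_2=(0,b_2(x))$, and the coupling (\ref{4.4}) is precisely $v(t)=(c,x_2(t))_{X_2}$ with $c=(c_1,c_2)$.

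\emph{The strong minimality (first bullet of Theorem~\ref{T3.2}).} For this $A_1$ the exponents of family (\ref{3.1}) are governed by $\mu_n=n^2$ and $\beta_n=b_{1n}/n^2$. Since $\mu_{n+1}-\mu_n=2n+1\ge1$ and $\sum_n1/\mu_n=\sum_n1/n^2<\infty$, and since by hypothesis~1 the Dirichlet series $\sum_n e^{-\mu_n\alpha}/\beta_n=\sum_n(n^2/b_{1n})e^{-n^2\alpha}$, which is exactly (\ref{4.13}), converges for some $\alpha>0$, Lemma~\ref{L3.3} (whose hypotheses are those of Lemma~\ref{L3.2}) applies and shows that $\{1,(b_{1n}/n^2)e^{n^2t}\}$ is strongly minimal; up to the irrelevant overall sign in $b_{1j}/\lambda_j=-b_{1j}/j^2$ this is family (\ref{3.1}). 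This is Theorem~\ref{T4.1}, and it verifies the first hypothesis of Theorem~\ref{T3.2}.

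\emph{The coupling condition (second bullet).} In the inner product of $X_2$ one gets at once $(c,b_2)_{X_2}=\int_0^\pi\bigl(c_1'(x)\cdot 0+c_2(x)b_2(x)\bigr)\,dx=\int_0^\pi c_2(x)b_2(x)\,dx\neq0$ by hypothesis~3. It remains to secure $c\in D(A_2^\ast)$ --- using $A_2^\ast=-A_2$ with $D(A_2^\ast)=(H^2\cap H_0^1)\times H_0^1$, this is the smoothness $c_1\in H^2\cap H_0^1$, $c_2\in H_0^1$ --- so that the Volterra kernel $K(t-\tau)=(c,S_2(t-\tau)b_2)_{X_2}$ in the reduction (\ref{3.7}) of the interconnected problem is continuously differentiable. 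Granting it, Theorem~\ref{T3.2} gives exact null-controllability of the interconnected system on $[0,t_1]$ for every $t_1>0$, which is the assertion.

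\emph{Where the work is.} The delicate point is precisely this last domain/regularity condition: the stated hypotheses only furnish $c_1\in H_0^1$ and $c_2,b_2\in L_2$, so neither $c\in D(A_2^\ast)$ nor $b_2\in D(A_2)$ is automatic and Theorem~\ref{T3.2} (or Theorem~\ref{T3.5}) cannot be quoted verbatim. To handle that case I would work with the explicit form $K(t)=\sum_n n\,\widehat c_{1,n}\widehat b_{2,n}\sin nt+\sum_n\widehat c_{2,n}\widehat b_{2,n}\cos nt$ (Fourier sine coefficients $\widehat c_{1,n},\widehat c_{2,n},\widehat b_{2,n}$), which is a continuous function with absolutely summable Fourier coefficients and value $K(0)=(c,b_2)_{X_2}\neq0$, or equivalently with the transfer function $\widehat K(s)=(c,(sI-A_2)^{-1}b_2)_{X_2}$, for which $s\widehat K(s)\to(c,b_2)_{X_2}$ as $\operatorname{Re}s\to\infty$ (cf.\ the remark after Theorem~\ref{T3.3}), and then solve the first-kind Volterra equation (\ref{3.7}) invoking that the target $w=v-v(0)$ lies in $H_0^1[0,t_1]$. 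Everything else --- the spectral picture and Riesz-basis property of $A_1$, the Dirichlet-series bookkeeping feeding Lemma~\ref{L3.3}, and the computation of $(c,b_2)_{X_2}$ --- is routine.
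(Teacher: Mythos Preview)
Your strong-minimality verification and the computation $(c,b_2)_{X_2}=\int_0^\pi c_2 b_2\,dx$ are correct and match the paper. The divergence is in which abstract theorem is invoked. You reach for Theorem~\ref{T3.2} and then, finding $c\in D(A_2^\ast)$ unavailable, propose a Fourier/transfer-function workaround for the Volterra equation. The paper does not do this: it applies the \emph{dual} criterion, Theorem~\ref{T3.5}, which asks for $b_2\in D(A_2)$ rather than $c\in D(A_2^\ast)$. The structural observation you missed is that $b_2=\bigl(\begin{smallmatrix}0\\ b_2(\cdot)\end{smallmatrix}\bigr)$ has vanishing first component, so the demanding requirement $H^2\cap H_0^1$ on the first slot of $D(A_2)=(H^2\cap H_0^1)\times H_0^1$ is satisfied trivially, and $A_2 b_2=\bigl(\begin{smallmatrix}b_2(\cdot)\\ 0\end{smallmatrix}\bigr)$. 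The paper then asserts $b_2\in D(A_2)$ for every $b_2(\cdot)\in L_2[0,\pi]$ and concludes directly from Theorem~\ref{T3.5}; no Fourier-kernel analysis is attempted, and the result is claimed to hold for arbitrary $c_1\in H_0^1[0,\pi]$.

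Your caution is, however, partly vindicated: strictly, $b_2\in D(A_2)$ still needs $b_2(\cdot)\in H_0^1[0,\pi]$ for the second slot, so the paper's claim that $b_2(\cdot)\in L_2$ alone suffices is at least cavalier about that component. But this is a wrinkle in the paper's own rigor, not a difference in route. The intended proof is the one-line reduction to Theorem~\ref{T3.5} via the zero first component of $b_2$; your transfer-function sketch is a plausible repair but it is not what the paper does.
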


\begin{proof}
We have here $c\left( \cdot \right) =\left(
\begin{array}{c}
c_{1}\left( \cdot \right) \\
c_{2}\left( \cdot \right)%
\end{array}%
\right) ,$ $b_{2}=\left(
\begin{array}{c}
0 \\
b_{2}\left( \cdot \right)%
\end{array}%
\right) .$ Therefore $b_{2}\in X_{2}$ is equivalent to $b_{2}\left( \cdot
\right) \in L_{2}\left[ 0,\pi \right] ,~$and$~\left( c,b_{2}\right)
=\int_{0}^{\pi }c_{2}\left( x\right) b_{2}\left( x\right)
dx,~A_{2}b_{2}=A_{2}\left(
\begin{array}{c}
0 \\
b_{2}\left( \cdot \right)%
\end{array}%
\right) =\left(
\begin{array}{c}
b_{2}\left( \cdot \right) \\
0%
\end{array}%
\right) ,$ so $b_{2}=\left(
\begin{array}{c}
0 \\
b_{2}\left( x\right)%
\end{array}%
\right) \in D\left( A_{2}\right) ~\ \ $for \ any $b_{2}\left( \cdot \right)
\in L_{2}\left[ 0,\pi \right] ~$and $\left( c,b_{2}\right) \neq 0.$ The
theorem follows from Theorem \ref{T3.5}. We have here the regular case.

\textbf{Remark. }The theorem assertion is valid for any function $%
c_{1}\left( \cdot \right) \in H_{0}^{1}\left[ 0,\pi \right] .$
\end{proof}

For example, conditions of Theorem \ref{T4.2} hold true for $b_{1}\left(
x\right) =b_{2}\left( x\right) =x,c_{2}\left( x\right) =1,x\in \left[ 0,\pi %
\right] .$ In this case $b_{1n}=\int_{0}^{\pi }x\sin nxdx=\frac{\left(
-1\right) ^{n+1}\pi }{n},$ the series $\sum\limits_{n=1}^{\infty }\frac{n^{2}%
}{b_{1n}}e^{-n^{2}\alpha }=\sum\limits_{n=1}^{\infty }\frac{\left( -1\right)
^{n+1}n^{3}}{\pi }e^{-n^{2}\alpha }$ converges for any $\alpha
>0;~b_{2}\left( \cdot \right) \in L_{2}\left[ 0,\pi \right] ,~$and $%
\int_{0}^{\pi }c_{2}\left( x\right) b_{2}\left( x\right) dx=\int_{0}^{\pi
}xdx=\allowbreak \frac{\pi ^{2}}{2}.$

\subsection{Singular case}

Let $c_{2}\left( \cdot \right) ,b_{2}\left( \cdot \right) \in L_{2}\left[
0,\pi \right] ,~$but $\int_{0}^{\pi }c_{2}\left( x\right) b_{2}\left(
x\right) dx=0.$ In this case $\left( c,b_{2}\right) =0,$ so Theorems \ref%
{T3.2} or \ref{T3.5} are not applicable, ant it is impossible to provide to
exact null controllability of interconnected system being considered.

\begin{theorem}
\label{T4.3}If

\begin{enumerate}
\item series (\ref{4.13}) converges for some $\alpha >0,$

\item Either $b_{2}\left( \cdot \right) \in H^{2}\left[ 0,\pi \right] \cap
H_{0}^{1}\left[ 0,\pi \right] ~$or $c_{1}\left( \cdot \right) ,c_{2}\left(
\cdot \right) \in H^{2}\left[ 0,\pi \right] \cap H_{0}^{1}\left[ 0,\pi %
\right] ,$

\item $\int_{0}^{\pi }c_{1}\left( x\right) b_{2}\left( x\right) dx\neq 0,$
\end{enumerate}

then system (\ref{4.1})--(\ref{4.3}),(\ref{4.5})-(\ref{4.7}), interconnected
by (\ref{4.4}) is exact null-controllable on $\left[ 0,t_{1}\right] $,$%
\forall t_{1}>0$ by distributions.
\end{theorem}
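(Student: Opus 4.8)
The plan is to derive Theorem~\ref{T4.3} from the abstract criteria already proved — Theorem~\ref{T3.3} together with the $(c,b_{2})=0$ analogue of Theorem~\ref{T3.5} noted right after it — by specializing them to the concrete heat/wave realization and matching each hypothesis to conditions (1)--(3). The heat side is immediate: the generalized exponents of equation~(\ref{4.1})--(\ref{4.3}) are exactly the family (\ref{4.12}), so with $\mu_{n}=n^{2}$, $\beta_{n}=b_{1n}/n^{2}$ one has $\sum 1/\mu_{n}=\sum 1/n^{2}<\infty$, $\mu_{n+1}-\mu_{n}=2n+1\geq 1$, and $\sum e^{-\mu_{n}\alpha}/\beta_{n}$ is literally series~(\ref{4.13}); thus condition~(1) is precisely the hypothesis of Lemmas~\ref{L3.2}--\ref{L3.3}, which yields strong minimality of (\ref{4.12}), and by Theorem~\ref{T4.1} the heat equation is exact null-controllable by smooth controls $v(\cdot)\in H^{1}_{\alpha 0}[0,t_{1}]$. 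As in the proofs of Theorems~\ref{T3.2}--\ref{T3.3}, it then remains only to realize every such $v$ as an observation $v(t)=(c,x_{2}(t))$ of the wave equation, i.e. to solve the first-kind Volterra equation $w(t)=\int_{0}^{t}(c,S_{2}(t-\tau)b_{2})\,u(\tau)\,d\tau$, $w=v-v(0)\in H^{1}_{0}[0,t_{1}]$, for a possibly distributional $u$.

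Next I would translate the wave-side hypotheses. In the given realization $X_{2}=H^{1}_{0}[0,\pi]\times L_{2}[0,\pi]$, $c=(c_{1},c_{2})$, $b_{2}=(0,b_{2}(x))$, $A_{2}(z_{1},z_{2})=(z_{2},z_{1}'')$, so $A_{2}b_{2}=(b_{2},0)$, $A_{2}^{2}b_{2}=(0,b_{2}'')$, and $A_{2}^{\ast}=-A_{2}$ with $D(A_{2}^{\ast})=D(A_{2})=(H^{2}\cap H^{1}_{0})\times H^{1}_{0}$. One reads off $(c,b_{2})=\int_{0}^{\pi}c_{2}b_{2}\,dx=0$ — the standing hypothesis of the singular case — so one integration by parts in the Volterra equation already reduces its kernel to $(c,S_{2}(t-\tau)A_{2}b_{2})$, and a \emph{second} integration by parts is forced. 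The alternative in condition~(2) just picks which transposition is used: if $b_{2}(\cdot)\in H^{2}\cap H^{1}_{0}$ then $b_{2}\in D(A_{2}^{2})$ and one pushes both derivatives onto $b_{2}$, obtaining — as in the singular extension of Theorem~\ref{T3.5} — the second-kind Volterra equation $w(t)=(c,A_{2}b_{2})\,U_{1}(t)+\int_{0}^{t}(c,S_{2}(t-\tau)A_{2}^{2}b_{2})\,U_{1}(\tau)\,d\tau$ with $U_{1}(0)=0$; if instead $c_{1},c_{2}\in H^{2}\cap H^{1}_{0}$ then $c$ carries the regularity and one pushes the derivatives onto $c$ through $A_{2}^{\ast}$, as in Theorem~\ref{T3.3} with $c$ replaced by $A_{2}^{\ast}c$. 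Either way the kernel is continuous by strong continuity of $S_{2}$, and its value on the diagonal equals $(c,A_{2}b_{2})=(A_{2}^{\ast}c,b_{2})$, whose non-vanishing is exactly condition~(3).

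Finally, with $(c,A_{2}b_{2})\neq 0$ the classical Volterra theory invoked in the proof of Theorem~\ref{T3.3} produces a continuous solution $U_{1}$, $U_{1}(0)=0$; since $w\in H^{1}_{0}$ has $\dot w\in L_{2}$ one gets $U=\dot U_{1}\in L_{2}[0,t_{1}]$, and then the control $u=\dot U=\ddot U_{1}$ exists as the second distributional derivative of a continuous function, giving exact null-controllability of the interconnected system by distributions. The only non-routine point — and the one I expect to be the main obstacle — is the regularity bookkeeping hidden in condition~(2): showing that each alternative really does place $b_{2}$ in $D(A_{2}^{2})$ (respectively $c$ in $D(A_{2}^{\ast 2})$), so that the twice-iterated integration by parts is legitimate and all repeated kernels stay continuous, and that every endpoint term drops out — which is exactly where membership in $H^{1}_{0}$ rather than merely $H^{1}$ is used. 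By contrast with the regular case (Theorem~\ref{T4.2}), where $c_{1}$ was irrelevant, here $c_{1}$ enters precisely through this second-order coefficient.
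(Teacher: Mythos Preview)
Your proposal is correct and follows essentially the same route as the paper's proof: verify that condition~(1) yields strong minimality of (\ref{4.12}) via Lemmas~\ref{L3.2}--\ref{L3.3} (hence the heat side is handled by Theorem~\ref{T4.1}), then compute $A_{2}b_{2}$, $A_{2}^{2}b_{2}$, $A_{2}^{\ast}c$, $A_{2}^{\ast 2}c$ to check that condition~(2) places $b_{2}\in D(A_{2}^{2})$ or $c\in D(A_{2}^{\ast 2})$ and that condition~(3) is exactly $(c,A_{2}b_{2})\neq 0$, so the singular-case criterion (Theorem~\ref{T3.3} and its dual analogue of Theorem~\ref{T3.5}) applies. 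The paper's own proof is terser---it simply records these computations and says ``the theorem follows from Theorems~\ref{T3.3}''---but the argument, including your correct identification of which branch of condition~(2) invokes the direct versus the dual version, is the same.
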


\begin{proof}
We have here $A_{2}^{\ast }=\left(
\begin{array}{cc}
0 & \frac{d^{2}}{dx^{2}} \\
1 & 0%
\end{array}%
\right) ,~A_{2}^{\ast }\left(
\begin{array}{c}
z_{1} \\
z_{2}%
\end{array}%
\right) =\left(
\begin{array}{c}
z_{2}^{\prime \prime } \\
z_{1}%
\end{array}%
\right) ,$ $c\left( \cdot \right) =\left(
\begin{array}{c}
c_{1}\left( \cdot \right) \\
c_{2}\left( \cdot \right)%
\end{array}%
\right) ,$ $b_{2}=\left(
\begin{array}{c}
0 \\
b_{2}\left( \cdot \right)%
\end{array}%
\right).$ Therefore

if $b_{2}\left( \cdot \right) \in H^{2}\left[ 0,\pi \right] \cap H_{0}^{1}%
\left[ 0,\pi \right] ,$ then

$A_{2}b_{2}$ $=A_{2}\left(
\begin{array}{c}
0 \\
b_{2}\left( \cdot \right)%
\end{array}%
\right) =\left(
\begin{array}{c}
b_{2}\left( \cdot \right) \\
0%
\end{array}%
\right) ,~A_{2}^{2}b_{2}=\left(
\begin{array}{c}
0 \\
b_{2}^{\prime \prime }\left( \cdot \right)%
\end{array}%
\right),$ so

$b_{2}=\left(
\begin{array}{c}
0 \\
b_{2}\left( \cdot \right)%
\end{array}%
\right) \in D^{2}\left( A_{2}\right) $ ;

if $c_{1}\left( \cdot \right) ,c_{2}\left( \cdot \right) \in H^{2}\left[
0,\pi \right] \cap H_{0}^{1}\left[ 0,\pi \right] ,$ then

$A_{2}^{\ast }c_{2}=A_{2}^{\ast }\left(
\begin{array}{c}
c_{1}\left( \cdot \right) \\
c_{2}\left( \cdot \right)%
\end{array}%
\right) =\left(
\begin{array}{c}
c_{2}^{\prime \prime }\left( \cdot \right) \\
c_{1}\left( \cdot \right)%
\end{array}%
\right), A_{2}^{\ast 2}c=A_{2}^{\ast }\left(
\begin{array}{c}
c_{2}^{\prime \prime }\left( \cdot \right) \\
c_{1}\left( \cdot \right)%
\end{array}%
\right) =\left(
\begin{array}{c}
c_{1}^{\prime \prime }\left( \cdot \right) \\
c_{2}^{\prime \prime }\left( \cdot \right)%
\end{array}%
\right), $ \noindent so $c\in D^{2}\left( A_{2}^{\ast }\right) $.

In both cases we have $\left( c,b_{2}\right) =0,\left( c,A_{2}b_{2}\right)
=\int_{0}^{\pi }c_{1}\left( x\right) b_{2}\left( x\right) dx,$ so according
to the third condition of the theorem $\left( c,A_{2}b_{2}\right) \neq 0$.
Hence the theorem follows from Theorems \ref{T3.3}. We have here the
singular case.
\end{proof}

For example, conditions of Theorem \ref{T4.3} hold true for $b_{1}\left(
x\right) =b_{2}\left( x\right) =x,c_{1}\left( x\right) =x\left( \pi
-x\right) ,c_{2}\left( x\right) =0,x\in \left[ 0,\pi \right] .$

\section{Conclusion}

Exact null-controllability conditions for two interconnected abstract
control equations (\ref{1.1})--(\ref{1.3}) governed by a control $u\left(
t\right) $ of equation (\ref{1.3}) are obtained.

Of course, these results can be extended for series of a number
interconnected equations, governed by a control of the last one.

The case of simply eigenvalues has been considered for the sake of
simplicity only.

The main problems allowing to obtain controllability results of the paper
are :

\begin{enumerate}
\item Establishing of the exact null-controllability of equation (\ref{1.1})
by smooth control.

\item Solvability conditions of Volterra integral equation (\ref{3.16}) of
the first kind and of the convolution type.
\end{enumerate}

%

Both these problems are independent on each other. The mutual independence
of these problems allow us to use the abstract approach developed in the
given paper for investigation of various control problems for interconnected
systems contained equations of a different structure. For example, equation (%
\ref{1.1}) may be a parabolic control equation, governed by force motion
control, and equation (\ref{1.3}) may be a linear differential control
system with delays \cite{Bellman&Cooke,Hale}, governed by force motion
control, and so on\footnote{%
As is known the author the existing results in this field are devoted to
couple PDE's.}.

The singular case does not seem to be essential (in our opinion), because
there are a lot of practical situations, for which equation (\ref{1.1})--(%
\ref{1.3}) and (\ref{4.5})--(\ref{4.7}) are given, and need to decide, how
to connect them. It means, that if the case $\left(c,b_{2}\right) =0$
occurs, one can always choose other vector $c,$ slightly different from the
first one, such that $\left( c,b_{2}\right) \neq 0.$

The exact null-controllability for interconnected heat--wave equations is
considered as illustrative example only. Surely the controllability problems
for many kinds of control PDE's have been extensively investigated in last
years.

In our private opinion, a great majority of them can be investigated by the
abstract approach presented in the given paper.

\label{References}

\end{document}